\newcommand\sC{{\mathcal C}}
\newcommand\sA{{\mathcal A}}
\newcommand\sF{{\mathcal F}}
\newcommand\sQ{{\mathcal Q}}
\newcommand\sB{{\mathcal B}}
\newcommand\sP{{\mathcal P}}
\newcommand\la{\lambda}
\newcommand\be{\beta}
\newcommand\e{\epsilon}
\newcommand\s{\sigma}
\newcommand\Ga{\Gamma}
\newcommand\ga{\gamma}
\newcommand\de{\delta}
\newcommand{\NN}{\ensuremath{\mathbb{N}}}
\newcommand{\hol}{\ensuremath{\mathcal{O}}}
\newcommand{\PP}{\ensuremath{\mathbb{P}}}
\newcommand{\FF}{\ensuremath{\mathbb{F}}}
\newcommand{\ra}{\ensuremath{\rightarrow}}
\def\eea{\end{eqnarray*}}
\def\bea{\begin{eqnarray*}}
\newcommand\dual{\mathrel{\raise3pt\hbox{$\underline{\mathrm{\thinspace d
\thinspace}}$}}}
\newcommand\qe{\ifhmode\unskip\nobreak\fi\quad $\Box$}       
\def\BOX{\hfill\lower.5\baselineskip\hbox{$\Box$}}
\newtheorem{theorem}{Theorem}
\newtheorem{theo}[theorem]{Theorem}
\newtheorem{rem}[theorem]{Remark}
\newtheorem{prop}[theorem]{Proposition}
\newtheorem{cor}[theorem]{Corollary}
\newtheorem{lemma}[theorem]{Lemma}
\newtheorem{example}[theorem]{Example}
\newenvironment{ex}{\begin{example}\rm}{\end{example}}
\theoremstyle{definition}
\newtheorem{defin}[theorem]{Definition}
\newcommand{\sR}{\ensuremath{\mathcal{R}}}
\newenvironment{dedication}
        {\begin{quotation}\begin{center}\begin{em}}
        {\par\end{em}\end{center}\end{quotation}}
\def\tagform@#1{\maketag@@@{\ignorespaces#1\unskip\@@italiccorr}}
\newcolumntype{H}{@{}>{\lrbox0}l<{\endlrbox}} 
\begin{document}

\title[Singularities of quartic surfaces]{Singularities  of  normal quartic  surfaces I (char=2).}
\author{Fabrizio Catanese}
\address{Lehrstuhl Mathematik VIII, 
 Mathematisches Institut der Universit\"{a}t
Bayreuth, NW II\\ Universit\"{a}tsstr. 30,
95447 Bayreuth, Germany \\ and Korea Institute for Advanced Study, Hoegiro 87, Seoul, 
133--722.}
\email{Fabrizio.Catanese@uni-bayreuth.de}

\thanks{AMS Classification: 14J17, 14J25, 14J28, 14N05.\\ 
The author acknowledges support of the ERC 2013 Advanced Research Grant - 340258 - TADMICAMT}

\maketitle

\begin{dedication}
Dedicated to Bernd Sturmfels on the occasion of his 60-th  
 birthday.
\end{dedication}

\begin{abstract}
We show, in this first part,  that the maximal number of singular
points of a  normal quartic surface $X \subset \PP^3_K$ defined over an algebraically closed field $K$ of characteristic $2$ is at most $16$.  
We produce examples with $14$,  respectively $12$, singular points and show that,  under several geometric assumptions ($\mathfrak S_4$-symmetry,
or behaviour of the Gauss map,  or structure of  tangent cone at  one of the   singular points $P$, separability/inseparability of the projection
with centre $P$), we can obtain   smaller upper bounds  for the number of singular points of $X$.

\end{abstract}

\tableofcontents

\setcounter{section}{0}

\section{Introduction} 

Given an irreducible surface $X$ of degree $d$ in $\PP^3$, defined over an algebraically closed  field $K$, which is normal, that is, with only   finitely many singular points, one important question is to determine 
 the maximal number $\mu(d)$  of singular points that $X$ can have (observe however, see for instance \cite{bsegre}, \cite{angers}, \cite{j-r}, \cite{p-t}, \cite{duco},
 that the research has been more focused on the seemingly simpler question of finding the maximal number of nodes, that is, ordinary quadratic singularities). The case of $d=1,2$ being trivial ($\mu (1)= 0, \mu (2)=1$), the first interesting cases are for $d= 3,4$. 
 
 We have that $\mu (3)= 4$,  while $ \mu (4)=16$  if char(K) $ \neq 2$.

\bigskip

For $d=3$ (see  proposition \ref{monoid} and for instance \cite{kummers} for  more of classical references), a normal cubic surface $X$ can have at most 4 singular points, no three of them can be collinear, 
and if it does have 4 singular points, these are linearly independent, hence $X$  is projectively equivalent to the 
so-called Cayley cubic, first apparently found by Schl\"afli, see \cite{schlaefli}, \cite{cremona},  \cite{cayley}
(and then the singular points are nodes).

The Cayley cubic has the simple equation

$$X : = \{ x : = (x_0, x_1, x_2, x_3) | \s_3(x ): =  \sum_i \frac{1}{x_i} x_0 x_1 x_2 x_3 = 0\} $$ 
Here $\s_3$ is the third elementary symmetric function (the four singular points are the 4 coordinate points).

 Even the classical case of cubic surfaces still offers plenty of open questions
\cite{Ra-Stu}: in this article we go up to
the case of quartic surfaces.

\bigskip
 
  The main purpose of this paper  is indeed to show  with elementary methods
   (Theorem \ref{mt}) that, if $ char(K) = 2$, then  $\mu(4)\leq  16$,
   and to provide easy examples which lead to the
    conjecture that $\mu(4)\leq  14$, which will be proven in Part II of this article,
    in cooperation with Matthias Sch\"utt (using the special features of elliptic fibrations in characteristic $2$).
  
  \bigskip
  
 Whereas  symmetric functions produce surfaces with the maximal number of singularities for degree $d=3$,
or for $d=4$ in characteristic $\neq 2$ (see for instance \cite{kummers}, \cite{mukai}) we show 
in the last section  that for $d=4$ and 
char $=2$ symmetric functions produce quartics with  at most  $12$ singular points,
and explicit examples with $0,1,4,5,6,10,12$ singular points.

In this paper we  produce the following explicit  example, of  quartic surfaces with $14$ singular points \footnote{and one more in subsection 2.3, suggested by Matthias Sch\"utt.},
and producing what we call  `the inseparable case':
$$X : = \{  (z, x_1, x_2, x_3) |  z^2 (x_1 x_2 + x_3^2) + (y_3 + x_1)(y_3 + x_2) y_3 (y_3 + x_1+ x_2)=0 ,$$
$$ y_3 : =  a_3 x_3 + a_1 x_1 + a_2 x_2, \  a_3 \neq 0, a_1, a_2, a_3  \ {\rm  general} \} .$$

A normal  quartic surface can have,   if char(K) $ \neq 2$, at most 16 singular points.
Indeed,  if char(K) $ \neq 2$, and $X$ is a normal quartic surface, by proposition \ref{monoid} it has at most $7$ singular points if it has a triple point,
else  it suffices to project from a double  point of the quartic  to the plane, and to use the bound for the number of singular points for a plane curve of degree $6$, which equals $15$, to establish that $X$ has at most $16$ singular points. 

Quartics with $16$ singular points  (char(K) $ \neq 2$) have necessarily nodes as singularities, and they are the so called 
 Kummer surfaces \cite{kummer} (the first examples were found by Fresnel, 1822).

There is a long history of research on Kummer quartic surfaces in char(K) $ \neq 2$, for instance it is  
 well known  that if $d=4, \mu = 16$, then $X$ is the quotient of a
principally polarized  Abelian surface
$A$ by the group $\{\pm 1\}$.

But in char $=2$ \cite{shioda} Kummer surfaces behave differently, and have at most $4$ singular points  \footnote{ A  similar construction, introduced in \cite{kon-schr}, and based on other group schemes, leads to `Kummer' surfaces $X$ with 17 singular points: these,
by the results of this paper, cannot be isomorphic to quartic surfaces in $\PP^3$.}.

In this paper we show among other results   that, if $X$ is a normal quartic surface defined over an  algebraically closed field $K$ of characteristic $2$, then:
\begin{enumerate}
\item
If $X$ has a point of multiplicity $3$, then $ | Sing(X)| \leq 7$. (Proposition \ref{monoid}).
\item
If $X$ has a point of multiplicity $2$ such that the projection with centre $P$ is inseparable, then $ | Sing(X)| \leq 16$ (Proposition \ref{inseparable}, see steps I) and II)  for smaller upper bounds under special assumptions).
\item
$ | Sing(X)| \leq  16$ (Theorem \ref{mt}), and equality holds only if the singularities are all nodes ($A_1$-singularities, double points with smooth projective tangent cone) or 15 nodes and an $A_2$-singularity.
\end{enumerate}

This paper has a  big overlapping with  the previous preprint \cite{quartics}, improves the main result  and most results in loc. cit.,
and   supersedes it.

In Part II, in cooperation with Matthias Sch\"utt \cite{part2} we  show: 

i)  the better upper bound  $ | Sing(X)| \leq 14$
holds,  with  equality  only if the singular points are nodes and the minimal resolution of $X$
is a supersingular K3 surface. 

ii) if the Gauss image is a plane, equivalently
if the equation contains  only even powers of one of the variables,
then, counting multiplicities,   $ | Sing(X)| \geq 14$ and generically the surface $X$  has 14 nodes as singularities;
one may ask whether   conversely all quartics with $14$ nodes arise in this way.

\subsection{Notation and Preliminaries} For a point in projective space, we shall freely use  the vector notation $(a_1, \dots, a_{n+1})$,
instead of the more precise  notation $[a_1, \dots, a_{n+1}]$, which denotes the equivalence class of the above vector.

Let $Q(x_1,x_2,x_3)=0$ be a conic over a field $K$ of characteristic $2$.

Then we can write  $$Q(x_1,x_2,x_3)= \sum_i b_i^2 x_i^2 + \sum_{i<j} a_{ij} x_i x_j= ( \sum_i b_i x_i)^2 + \sum_{i<j} a_{ij} x_i x_j.$$
One finds that, unless $Q$ is the square of a linear form,  $[a] : = (a_{23},a_{13},a_{12})$ is the only point where the gradient of $Q$ vanishes. 

Taking coordinates such that $[a] = (0,0,1)$ we have that $Q(x) = x_1 x_2 + b(x)^2$, where $b(x)$ is a
(new)  linear form: for instance, if $Q(x_1,x_2,x_3)= x_1 x_2 + x_1 x_3 + x_2x_3$, then $Q  = (x_1 + x_2)(x_1 + x_3) + x_1^2$.

We have two cases  $Q[a]=0$, hence $b_3=0$, hence $Q(x) = x_1 x_2 + b(x_1, x_2)^2$, hence changing again coordinates
we reach the normal form $Q = x_1 x_2$; while if $b_3 \neq 0$ we reach the normal form  $Q = x_1 x_2 + x_3^2$.

Hence we have just the three normal forms (as in the classical case)
$$ x_1^2 ,  \ x_1 x_2, \  x_1 x_2 + x_3^2.$$

 We also use the standard notation for partial derivatives, given a polynomial $G(x_1, \dots, x_n)$,
we denote
$$G_i : = \frac{\partial{G}}{\partial{x_i}}.$$
 
\section{Singular points of quartic surfaces in characteristic 2}

We consider a quartic surface $X = \{ F =0\} \subset \PP^3_K$, where $K$ is an algebraically closed field of characteristic equal to $2$,
and such that $X$ is normal, that is, $ Sing (X)$ is a finite set. 

If $X$ has a point of multiplicity $4$, then this is the only singular point, while 
if $X$ contains a triple point $P$, we can write the equation, assuming that the point $P$ is the point $x_1=x_2=x_3=0$:
$$ F (x_1, x_2,x_3, z) = z G (x) + B (x) ,$$
 where of course $G(x)$ is homogeneous of degree $3$ and $B(x)$ is homogeneous of degree $4$.

Recalling that  $G_i : = \frac{\partial{G}}{\partial{x_i}}, B_i : = \frac{\partial{B}}{\partial{x_i}}$, we have 
$$ Sing(X) = \{ G(x) = B(x) = G_i z + B_i = 0, \ i=1,2,3 \} \ .$$

If $(x,z) \in Sing(X)$ and $x \in  \{ G(x) = B(x) = 0\}$, then $ x \notin Sing (\{G=0\})$, since $ x \in Sing (\{G=0\}) \Rightarrow  x \in Sing (\{B=0\})$ and then the whole line $(\la_0 z, \la_1 x) \subset Sing(X)$. Hence $\nabla (G) (x) \neq 0$ and there exists a unique
singular point of $X$ in the above line. Since the two curves  $ \{ G(x) = 0\}, \{ B(x) = 0\}$
have the same tangent at  $x$ their intersection multiplicity at $x$ is at least $2$, and we conclude:

\begin{prop}\label{monoid}
Let $X$ be  quartic surface $X = \{ F =0\} \subset \PP^3_K$, where $K$ is an algebraically closed field,
and suppose  that $ Sing (X)$ is a finite set. If $X$ has a triple point then $ | Sing(X)| \leq 7$.

More generally, if  $X$ is a degree $d$ surface $X = \{ F =0\} \subset \PP^3_K$, where $K$ is an algebraically closed field,
and we suppose  that 
\begin{itemize}
\item
$ Sing (X)$ is a finite set, and 
\item
  $X$ has a  point of multiplicity $d-1$
  \end{itemize}
  
 then $$ | Sing(X)| \leq 1 + \frac{d (d-1)}{2}.$$

\end{prop}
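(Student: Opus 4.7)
My plan is to generalize the argument already sketched before the statement from $d = 4$ to arbitrary $d$. Normalize coordinates so that the point of multiplicity $d - 1$ is $P = (0,0,0,1)$; the vanishing of all terms of degree $< d - 1$ in the affine equation at $P$ forces $F = z G(x) + B(x)$ with $G, B$ homogeneous of degrees $d - 1, d$ respectively in $x = (x_1, x_2, x_3)$. The Jacobian criterion gives
$$ Sing(X) = \{P\} \cup \{ (x,z) : G(x) = B(x) = 0,\ zG_i(x) + B_i(x) = 0,\ i = 1, 2, 3\},$$
since $F = zG + B$ vanishes automatically on $\{G = B = 0\}$. Finiteness of $Sing(X)$ forces $F$ to be irreducible and hence $G, B$ to be coprime (otherwise $F$ factors and $X$ is singular along a curve), so B\'ezout applies to $C := \{G = 0\} \cap \{B = 0\} \subset \PP^2$ and gives $\sum_{x_0 \in C} i_{x_0}(G, B) = d(d - 1)$.

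Next I would repeat the dichotomy from the text verbatim. If some $x_0 \in C$ had $\nabla G(x_0) = 0$, the equations $z G_i(x_0) + B_i(x_0) = 0$ would force $\nabla B(x_0) = 0$, so the whole projective line $\overline{P x_0}$ would sit inside $Sing(X)$, contradicting finiteness. Hence $\nabla G(x_0) \neq 0$ at every $x_0 \in C$, so $z = -B_i(x_0)/G_i(x_0)$ is determined for any $i$ with $G_i(x_0) \neq 0$, and projection from $P$ gives a bijection $Sing(X) \setminus \{P\} \to C$. The same identity $\nabla B(x_0) = -z\,\nabla G(x_0)$ exhibits $\nabla B(x_0)$ either as a scalar multiple of the nonzero $\nabla G(x_0)$ (so $\{G = 0\}$ and $\{B = 0\}$ are both smooth at $x_0$ with common tangent) or as zero (so $\{B = 0\}$ has multiplicity at least $2$ at the smooth point $x_0$ of $\{G = 0\}$). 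In either case $i_{x_0}(G, B) \geq 2$.

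Combining this inequality with the B\'ezout equality gives $2\,|C| \leq d(d - 1)$, and adding $P$ yields the stated bound $|Sing(X)| \leq 1 + d(d-1)/2$. The only place where characteristic $2$ conceivably enters is the intersection-multiplicity claim, since classical polar or tangent-line identities can degenerate over such fields; however the identity $\nabla B(x_0) = -z\,\nabla G(x_0)$ is a purely formal statement in partial derivatives which transfers without change, and a smooth branch of $\{G = 0\}$ together with either a shared tangent or a multiplicity-two point of $\{B = 0\}$ bounds $i_{x_0}$ from below by elementary computation in the local ring at $x_0$. I therefore expect the characteristic $2$ subtlety to be the most delicate step, but not a genuine obstacle.
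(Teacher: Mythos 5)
Your proposal is correct and follows essentially the same route as the paper: write $F = zG + B$ with $\deg G = d-1$, $\deg B = d$, show $\nabla G \neq 0$ at each point of $\{G=B=0\}$ (else a whole line through $P$ lies in $\mathrm{Sing}(X)$), deduce that at most one singular point lies over each such point and that the local intersection multiplicity of $G$ and $B$ there is at least $2$, and conclude by B\'ezout. Your explicit verification that the multiplicity bound is characteristic-free is a welcome elaboration of a step the paper leaves implicit, but it is not a different argument.
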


\begin{proof}
The second assertion follows by observing that in proving the first we never used the degree $d$, except for
concluding that the total intersection number (with multiplicity) of $G,B$ equals $ d (d-1)$.

\end{proof}
 
Assume now that we have a double point $P$ of $X$ and we take coordinates such that $P = \{  x := (x_1, x_2,x_3) =0, z=1\}$, 
thus we can write the equation 
 $$ {\bf (Taylor \ development)}: \  \ F (x_1, x_2,x_3, z) = z^2 Q (x) +  z G (x) + B (x) ,$$
  where of course $Q,G,B$ are homogeneous of respective degrees $2,3,4$.

Then $$  \ Sing (X) = \{ (x,z) | G(x) =  z^2 Q (x)  + B(x) = z^2 Q_i (x) +  z G_i  (x) + B_i (x) =0, \ i=1,2,3\}.$$

We consider then the projection 
$$\pi_P : X \setminus \{P\} \ra \PP^2 : = \{ (x_1, x_2,x_3)\}.$$

\begin{lemma}\label{1-1}

 1) If $P$ is a singular point of the quartic $X$,  consider the projection $Sing (X) \setminus \{P\} \ra \PP^2$.
 
Two  singular points (different from $P$) can have the same image only if they map to the same point $x$ of 
 the finite subscheme $\Sigma \subset \PP^2$ defined by $Q=G=B=0$,  
 the three gradients $\nabla Q (x) ,  \nabla G(x) ,   \nabla B (x)$ are all proportional and   moreover $\nabla Q (x) \neq 0$,
$\nabla G (x) \neq 0$.

2) If $x \in \Sigma$, then there is a $z$ such that $(x,z) \in Sing(X)$ if 
$$ z^2 \nabla Q (x)  + z \nabla G(x)  +  \nabla B (x)= 0.$$
\end{lemma}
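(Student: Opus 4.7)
The plan is to reduce the singularity conditions to algebraic relations among $Q,G,B$ and their gradients at $x$, exploiting the Taylor expansion $F = z^2 Q(x) + z G(x) + B(x)$ recalled above. The singular locus of $X$ is cut out by $F = 0$, by $F_z = G(x) = 0$ (using $\mathrm{char}\,K = 2$), and by $F_i = z^2 Q_i(x) + z G_i(x) + B_i(x) = 0$ for $i=1,2,3$; combining the first two, $z^2 Q(x) + B(x) = 0$ on $\Sing(X)$.

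For 1), suppose two distinct singular points $(x,z_1)$ and $(x,z_2)$ with $z_1 \neq z_2$ lie above the same $x \neq 0$. Subtracting the two equations $z_j^2 Q(x) + B(x) = 0$ and using the characteristic-$2$ identity $z_1^2 - z_2^2 = (z_1 - z_2)^2$ forces $Q(x) = 0$, hence also $B(x) = 0$; together with $G(x) = 0$ this puts $x \in \Sigma$. Subtracting the two gradient relations in the same way yields $\nabla G(x) = (z_1 - z_2)\nabla Q(x)$, and substituting this back into the gradient equation at $z_1$, with the cancellation $2 z_1^2 = 0$, gives $\nabla B(x) = z_1 z_2 \nabla Q(x)$. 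Thus $\nabla Q(x), \nabla G(x), \nabla B(x)$ are all proportional.

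The non-vanishing assertions are then a consequence of normality together with part 2), which I prove next. Indeed, if $\nabla Q(x) = 0$, the above proportionalities force $\nabla G(x) = \nabla B(x) = 0$ as well, and by 2) every point of the line $L_x \subset \PP^3$ joining $P$ to $x$ would then be a singular point of $X$, contradicting the finiteness of $\Sing(X)$. Hence $\nabla Q(x) \neq 0$, and since $z_1 \neq z_2$ we also get $\nabla G(x) = (z_1 - z_2)\nabla Q(x) \neq 0$.

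For 2), if $x \in \Sigma$ then both $F(x, z)$ and $F_z(x, z) = G(x)$ vanish identically in $z$ (geometrically, the line $L_x$ is already contained in $X$), so on the fibre of the projection above $x$ the only remaining singularity conditions are $F_i(x, z) = 0$ for $i = 1,2,3$, which package exactly into $z^2 \nabla Q(x) + z \nabla G(x) + \nabla B(x) = 0$. The main obstacle in the argument is purely the characteristic-$2$ bookkeeping, in particular the identities $z_1^2 - z_2^2 = (z_1-z_2)^2$ and $z_1^2 + z_1(z_1 - z_2) = z_1 z_2$ that drive the elimination; once these are in hand, both parts follow by direct linear algebra in $z$.
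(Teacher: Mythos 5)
Your proof is correct and follows essentially the same route as the paper: subtract the two gradient equations at $(x,z_1)$ and $(x,z_2)$ to get $\nabla G(x)=(z_1+z_2)\nabla Q(x)$ and $\nabla B(x)=z_1z_2\nabla Q(x)$, then rule out $\nabla Q(x)=0$ by normality via part 2). The only cosmetic difference is that you deduce $x\in\Sigma$ algebraically from the pair of equations $z_j^2Q(x)+B(x)=0$, where the paper observes geometrically that the line through $P$ and the two singular points must lie in $X$; both give the same conclusion.
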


\begin{proof}
In fact, if a line $L$ through $P$ intersects $X$  in $2$ other singular points, then $ L \subset X$,
hence 
$$ L \subset \{ (x,z) | Q(x) = G(x) = B(x) = 0\} = : P * \Sigma,$$
where $\Sigma \subset \PP^2$ is the subscheme defined by $Q=G=B=0$,
and $\Sigma$  is a 0-dimensional subscheme since $X$ is irreducible.

If $x \in \Sigma$ and $(x,z), (x,z+ w) \in Sing(X)$ are different points, from the equations 
$$ z^2 \nabla Q (x)  + z \nabla G(x)  +  \nabla B (x)= (z^2 + w^2 )\nabla Q (x)  + (z + w ) \nabla G(x)  +  \nabla B (x)=0$$
follows $   \nabla G(x) = w \nabla Q (x) $, $ \nabla B (x) = z (z+w)  \nabla Q (x)  $.
In particular, it cannot be $\nabla Q (x) = 0$, and the three curves are all tangent at $x$.

Finally, if the three gradients are proportional, then we can find $(z,w)$ solving the equations
$   \nabla G(x) = w \nabla Q (x) $, $ \nabla B (x) = z (z+w)  \nabla Q (x)  $; and $w\neq 0$ if $   \nabla G(x) \neq 0$.

\end{proof}

\subsection{The 7 strange points of a plane quartic in characteristic $=2$}

In general, if $\{ g(x) = 0 \}$ is a plane curve of even degree $d=2k$ , from the Euler formula $\sum_i x_i g_i \equiv 0$
  we infer that if the critical scheme $\sC_g := \{\nabla g (x) =  0\}$  is finite,   then the first trivial
  estimate is  that its  cardinality is at most $(d-1)^2$, by the theorem of B\'ezout.
  
  Take in fact a line $L$ not intersecting this scheme $\sC_g$, and assume that $L = \{ x_3 = 0\}$: then 
  $\{ \nabla g (x) =  0\} = \{ g_1 = g_2=0, \ x_3 \neq 0 \}$, and this set has, with multiplicity, cardinality $(d-1)^2$ 
   or less if
   $g_1=  g_2 = x_3 = 0$ is non empty.
   
\bigskip

We consider   first the case of a general plane quartic curve $\{ B(x) = 0\}$.

The crucial observation is that  the space of (homogeneous) quartic polynomials $K [x_1,x_2,x_3]_4$
splits as  a direct sum 
$$K [x_1,x_2,x_3]_4 = \sQ \oplus  V,$$
where $\sQ$  consists of squares of (homogeneous)  quadratic polynomials, and V is spanned as follows: 

$$ V : = \langle x_i^3 x_j,  x_i x_1x_2x_3 \rangle.$$ 

$V$ has dimension $9$  and the group $GL(3,K)$ acts with finite stabilizer on the
Klein curve 

$$ B^0 : = x_1^3 x_2 + x_2^3 x_3 + x_3^3 x_1 $$

hence the orbit of $B^0$ is dense in $V$.

The gradient $\nabla B^0$ vanishes
at precisely $7$ points, the seven points $ \{ (1, \e, \e^5)| \e^7 =1\}$.

In fact,  $$ B^0_1 = x_1^2 x_2 + x_3^3 ,  \  B^0_2 = x_2^2 x_3 + x_1^3 , \ B^0_3 = x_3^2 x_1 + x_2^3 .$$

Using the first equation and taking the cubes of $x_2^2 x_3 = x_1^3$, we find (since $x_i \neq0$ for the points of $\sC$)
that $ y = \e x$, with $\e^7=1$, and $ z = \e^5 x$. Hence we get the seven points $ \{ (1, \e, \e^5)| \e^7 =1\}$.

We derive the following property.

\begin{prop}\label{7}
 For a homogeneous quartic polynomial $B \in K [x_1,x_2,x_3]_4$
 let $\sC_B$  be the critical locus of $B$
(where the gradient 
$\nabla B$ vanishes). 
If $\sC_B$  is a finite set, then it consists of at most  7 points.

For $B$ general, $\sC_B$  consists of exactly 7 reduced points.
\end{prop}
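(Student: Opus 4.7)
The plan is to reduce the count to the zeros of a section of a rank-$2$ vector bundle on $\PP^2$, which will yield the sharp bound $7$ rather than the Bézout bound $9$. First I would replace $B$ by its $V$-component: writing $B = Q^2 + B'$ with $B' \in V$ via the decomposition $K[x_1,x_2,x_3]_4 = \sQ \oplus V$, in characteristic $2$ one has $\nabla(Q^2) = 2 Q \nabla Q = 0$, so $\nabla B = \nabla B'$ and $\sC_B = \sC_{B'}$. Thus I may and do assume $B \in V$; in particular, $\sC_B$ is finite only if $B \neq 0$.

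Next I would interpret $\nabla B$ as a section of a vector bundle on $\PP^2$. The Euler identity $\sum_i x_i B_i = 4 B = 0$, valid in characteristic $2$ for $B$ homogeneous of even degree, says that the triple $(B_1, B_2, B_3) \in H^0(\Oh_{\PP^2}(3))^{\oplus 3}$ lies in the kernel of the multiplication map $(x_1, x_2, x_3) \colon \Oh_{\PP^2}(3)^{\oplus 3} \to \Oh_{\PP^2}(4)$, and this kernel is $\Omega^1_{\PP^2}(4)$ by the twisted Euler sequence. Hence $\nabla B$ defines a section $s_B$ of $\Omega^1_{\PP^2}(4)$ whose zero scheme is $\sC_B$. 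A short computation from $c(\Omega^1_{\PP^2}) = 1 - 3h + 3h^2$ and the standard formula for tensoring a rank-$2$ bundle by a line bundle yields $c_2(\Omega^1_{\PP^2}(4)) = 3h^2 - 12h^2 + 16h^2 = 7h^2$.

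Whenever $\sC_B$ is finite, the zero scheme of $s_B$ has the expected codimension $2$ on the smooth surface $\PP^2$, so it is a local complete intersection and its length equals $\deg c_2(\Omega^1_{\PP^2}(4)) = 7$. Since the set-theoretic cardinality of a $0$-dimensional scheme is bounded above by its length, this gives $|\sC_B| \leq 7$. For the second assertion, the Klein quartic $B^0 \in V$ already exhibits $7$ distinct critical points, and since the total length is $7$ whenever the scheme is finite, $\sC_{B^0}$ is automatically reduced; the condition ``$\sC_B$ consists of $7$ reduced points'' is open in $B$, so combined with the density of the $\GL(3,K)$-orbit of $B^0$ in $V$ already noted above, the claim for general $B$ follows.

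The main subtlety is justifying the local complete intersection and length claim for every $B$ with finite $\sC_B$, not only the generic one. I would either invoke directly that a section of a rank-$2$ bundle on a smooth surface whose zero scheme has pure expected codimension is automatically Koszul-regular, or bypass this by applying miracle flatness to the universal incidence $I = \{(B,p) : \nabla B(p) = 0\} \subset \PP(V) \times \PP^2$, which is cut out by two equations on each chart $\{x_i \neq 0\}$ thanks to Euler and is therefore Cohen--Macaulay.
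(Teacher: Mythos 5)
Your proof is correct, but it takes a genuinely different route from the paper's. The paper argues elementarily: from the characteristic-$2$ Euler relation $x_3B_3=x_1B_1+x_2B_2$ it first disposes of the degenerate case where $\{B_1=B_2=0\}$ contains a curve (getting at most $6$ points there), and otherwise starts from the B\'ezout bound of $9$ for $\{B_1=B_2=0\}$ and gains the missing $2$ by passing a line $x_3=0$ through $\de\geq 2$ points of $\sC_B$: at each such point $x_1B_1+x_2B_2=x_3B_3$ vanishes to order at least $2$, so the local intersection multiplicity of $B_1,B_2$ there is at least $2$, whence $|\sC_B|\leq 9-\de\leq 7$. You instead recognize $\nabla B$ as a section of $\Omega^1_{\PP^2}(4)$ via the twisted Euler sequence (using that $\sum_i x_iB_i=4B=0$ in characteristic $2$) and read the bound off from $c_2(\Omega^1_{\PP^2}(4))=7$; your computation $3-12+16=7$ is right, and the point you flag as the main subtlety is genuinely fine, since two elements of height $2$ in a $2$-dimensional regular local ring form a regular sequence, so a $0$-dimensional zero scheme of a section of a rank-$2$ bundle on a smooth surface is Koszul-regular of length $c_2$. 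Your treatment of the generic case (total length $7$, the $7$ distinct critical points of the Klein quartic forcing reducedness, openness, and the reduction $B=q^2+B_0$) matches the paper's in substance. What your approach buys is conceptual clarity about where the number $7$ comes from, plus an immediate generalization to even degree $d$, namely $c_2(\Omega^1_{\PP^2}(d))=d^2-3d+3=(d-1)(d-2)+1$, which is precisely the bound the paper's subsequent Remark defers to Part II; what the paper's argument buys is that it stays entirely within elementary projective geometry (B\'ezout and tangency), consistent with the article's stated aim of using elementary methods.
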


\begin{proof}

   Since $x_3 B_3 = x_1 B_1 + x_2 B_2$, we get that if $B_1=B_2 = 0$, then $x_3 B_3=0$.
   
   If $\{B_1=B_2 = 0\}$ is infinite, then on its divisorial part $C$ we have $x_3 =0$, since $\sC_B$ is finite.
   
   This means that $B = x_3 B' + \be(x_1,x_2)^2$, hence $\nabla B=0 \Leftrightarrow \nabla (x_3 B')=0$.
   
   If $x_3$ does not divide $B'$, then  $ | \sC_B|  \leq 6,$ since the gradient of $B'$ vanishes at most in $3$
   points. If $x_3$ divides $B'$, we get a contradiction to the finiteness of $  \sC_B$.
      
   We can therefore assume that we are in  the case where $\sB' : = \{B_1=B_2 = 0\}$ is finite. 
   
   Take a line containing $\de \geq 2$ points of $\sC_B \subset \sB'$, 
   say $x_3=0$, and let $x_1, x_2$ be coordinates 
   both not vanishing at these points.
   
   Then by Bezout's theorem $\sB'$ consists of at most $9$ points, counted with multiplicity.
   
   At any of the $\de \geq 2$ points of  $\sC_B \cap \{x_3=0\}$, the equation 
   $x_3 B_3 = x_1 B_1 + x_2 B_2$ shows that, since  $x_3 B_3$ vanishes of multiplicity at least $2$,
    a linear combination of
   $\nabla B_1, \nabla B_2$ vanishes, implying that  the local  intersection  multiplicity of $B_1, B_2$ is $\geq 2$.
   
   Hence $ | \sC_B| \leq 9 - \de \leq 7.$

For the second assertion, use the decomposition $K [x_1,x_2,x_3]_4 = \sQ \oplus  V,$
which allows us to write a general polynomial, after a change of variables, in the form
$$B = q^2 + B_0, \ q  \in K [x_1,x_2,x_3]_2.$$

We conclude since then $\nabla B = \nabla B_0.$

\end{proof}

\begin{rem}
(a) In Part II of this article we  show more generally  in a more elementary way  that if $B$ is a polynomial of even degree $=2m$,
and $\sC_B$ is finite, then its cardinality is at most $(d-1)(d-2) +1$, and that this estimate is sharp,
that is, equality holds in a Zariski open set of the spaces of such polynomials.

(b) A referee points out that a more general result (also in other characteristics) is  contained in Theorem 2.4 of \cite{liedtkecan},
whose statement  however does neither  mention derivatives nor critical sets, so that our clear statement
about $\sC_B$  is not easy to extract from the statement  of Theorem 2.4 of \cite{liedtkecan}.
\end{rem}

\subsection{The inseparable case}
We consider first the inseparable case where $G \equiv 0$, hence
$$ X  = \{   z^2 Q(x)  + B(x) = 0 \},  Sing (X) =  X \cap \{   z^2 \nabla Q (x) =  \nabla B (x) \}.$$

\begin{prop}\label{inseparable}
Let $X$ be  a normal quartic surface $X = \{ F =0\} \subset \PP^3_K$, where $K$ is an algebraically closed field
of characteristic  $2$,
hence $ Sing (X)$ is a finite set. If $X$ has a double point  $P$ as in (Taylor) 
such that the projection with centre $P$ is an inseparable double cover of $\PP^2$, i.e., $G \equiv 0$,
 then  $ | Sing(X)| \leq 16$, and there exists  a  case with $ | Sing(X)| =  14$.

\end{prop}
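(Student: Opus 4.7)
The plan is to project from $P$ to $\PP^2$, observe that $\pi_P$ is injective on $\Sing(X)\setminus\{P\}$, and bound the image via B\'ezout according to the normal form of $Q$. Since $G\equiv 0$ and $\mathrm{char}(K)=2$, one has $F_z = 2zQ = 0$, hence
$$\Sing(X) = \{F = 0,\ z^2 Q_i(x) = B_i(x),\ i=1,2,3\}.$$
Because Frobenius $z\mapsto z^2$ is bijective on $K$, whenever $\nabla Q(x)\neq 0$ the equations determine $z$ uniquely. Specializing Lemma \ref{1-1} to $\nabla G\equiv 0$, the relation $\nabla G(x) = w\nabla Q(x)$ forces $w=0$, so two distinct singular points cannot share the same image in $\PP^2$. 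Thus $\pi_P$ is injective on $\Sing(X)\setminus\{P\}$ and it suffices to bound its image.

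The three normal forms of $Q$ are $x_1^2$, $x_1x_2$, and $x_1x_2+x_3^2$. If $Q=x_1^2$ then $\nabla Q\equiv 0$, so the image lies in $\sC_B = \{\nabla B = 0\}$, which has at most $7$ points by Proposition \ref{7}; hence $|\Sing(X)|\leq 8$. If $Q=x_1x_2+x_3^2$ then $\nabla Q=(x_2,x_1,0)$ vanishes only at the apex $(0,0,1)$; for $x$ in the image with $\nabla Q(x)\neq 0$, the proportionality $\nabla B\parallel\nabla Q$ forces $B_3(x)=0$, and requiring $z^2=B_1/x_2$ to be consistent with $z^2Q+B=0$ forces $B_1Q+x_2B=0$. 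B\'ezout applied to the cubic $\{B_3=0\}$ and the quintic $\{B_1Q+x_2B=0\}$ bounds the image by $3\cdot 5=15$ points (the apex lies automatically on $\{B_3=0\}$ in characteristic $2$, so no separate counting is needed). The case $Q=x_1x_2$ is treated analogously: normality forces the coefficient of $x_3^4$ in $B$ to be nonzero (otherwise $\{x_1=x_2=0\}\subset X$), so the apex does not lie in the image, and the same B\'ezout bound of $15$ applies. In all three cases $|\Sing(X)|\leq 16$.

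For the lower bound, the quartic displayed in the Introduction has $Q=x_1x_2+x_3^2$ and $B=y_3(y_3+x_1)(y_3+x_2)(y_3+x_1+x_2)$, a product of four lines in general position. The $\binom{4}{2}=6$ pairwise intersection points are nodes of $B$ (where $\nabla B$ vanishes), each contributing a singular point $(x,0)\in X$; a direct computation shows that for generic $(a_1,a_2,a_3)$ the scheme $\{B_3=0\}\cap\{B_1Q+x_2B=0\}$ contains $7$ further points arising from the interaction of $Q$ with $B$, totalling $13$ image points plus $P$, i.e.\ $14$ singularities. The main obstacle throughout is to ensure that $\{B_3=0\}$ and the quintic $\{B_1Q+x_2B=0\}$ share no common component, since such a component would produce a curve of singular points on $X$ and violate normality; improving the bound $16$ to the conjectured sharp $14$ (ruling out $15$ and $16$ altogether) requires the supersingular K3 methods deferred to Part II.
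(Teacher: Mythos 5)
Your strategy --- injectivity of the projection from $P$ on $\Sing(X)\setminus\{P\}$, a case division according to the normal form of $Q$, and a B\'ezout count of the image in each case --- is exactly the route the paper takes, and the bounds you quote ($7$, resp.\ $15$, image points, hence $\leq 16$ overall) are the ones that appear there. However, two of your steps hide genuine issues. First, in the case $Q=x_1^2$ you invoke Proposition \ref{7}, which applies only when $\sC_B=\{\nabla B=0\}$ is \emph{finite}; an infinite $\sC_B$ is perfectly compatible with normality of $X$ here, because a point of $\sC_B$ with $x_1=0$ and $B\neq 0$ simply does not lift to $X$. This subcase must be handled separately: the paper shows that then the divisorial part of $\sC_B$ is supported on $x_1=0$, so $B=q(x_2,x_3)^2+x_1^2B'$ and one gets at most $2$ singular points. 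Second, your claim that $\{x_1=x_2=0\}\subset X$ would violate normality is false --- a normal quartic may contain a line; what you actually need (and what is true) is that a singular point of $X$ over the apex would force $B=B_1=B_2=B_3=0$ there, hence the whole line over the apex would be singular.

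The remaining gaps are matters of completeness rather than of wrong ideas, but they cannot be waved away. The exclusion of a common component of $\{B_3=0\}$ and the quintic requires an actual argument that such a component forces a positive-dimensional piece of the critical scheme of $F$ to meet $X$ in a curve; the paper does this case by case, with $\{x_1=0\}$ and $\{x_2=0\}$ needing separate treatment. Most substantially, the existence of a surface with $|\Sing(X)|=14$ is half of the proposition, and ``a direct computation shows'' is not a proof: one must solve the system for $B=y_1y_2y_3y_4$ explicitly, locate the extra point where $\nabla Q=\nabla B=0$ but $Q,B\neq 0$, find the three residual pairs of points, and verify that for general $a_i$ all $1+6+1+2+2+2=14$ points are distinct. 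As an outline your proposal is faithful to the paper's proof, but it is incomplete at precisely the places where the work lies.
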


\begin{proof}
As in the proof of proposition \ref{monoid}, if a singular point $(x,z)$
satisfies $Q(x)=0$, then $B(x)=0$,
 and since $ Sing(X)$ is finite it must be   $\nabla (Q) (x) \neq 0$:  under this assumption
$(x,z)$ is the only  singular point lying above
the point $x \in \Sigma = \{ Q(x)= B(x)=0\}$.

Hence, in view of Lemma \ref{1-1} the projection $Sing(X) \setminus \{P\} \ra \PP^2$ is injective.

Conversely, if $x \in \Sigma  = \{ Q(x)= B(x)=0\}$,  it is not possible that $\nabla Q (x) =  \nabla B (x)=0$,
while for $\nabla Q (x) = 0,  \nabla B (x) \neq 0$ there is no singular point lying over $x$,
and for $\nabla Q (x)  \neq 0$ there is at most one singular point lying over $x$, and one iff 
$\nabla Q (x) ,  \nabla B (x) $ are proportional vectors.

Hence in the last  case the intersection multiplicity of $Q,B$ at $x$ is at least $2$, and
in particular  over $\Sigma$ lie at  most $4$ singular points.

\bigskip

We proceed now  in the proof considering several different cases, according to the normal form of $Q$.

\bigskip

{\bf Step I) 
We first consider the case where $Q$ is a double line, and show that in this case $X$ has at most  $8$ singular points}.

\medskip

We can in fact choose coordinates such that $ F = z^2 x_1^2 + B(x)$, hence the singular points $(x,z)$ are determined
by the equations $F=  \nabla B (x)=0$. And we have a bijection between $Sing(X)$ and the points of the plane with coordinates $x$ where
 $ \nabla B (x)=0$ and   $x_1 \neq 0$. In fact the points where  $ \nabla B (x)=x_1 = 0, B \neq 0$, do not come from singular points,
 while the points with $ \nabla B (x)=x_1 = B = 0$ would provide   infinitely many singular points.

    We are done  by Proposition \ref{7}  if $\sC_B$ is finite. In the contrary case, since $Sing(X)$ is finite, the only common divisor of the $B_i$'s
   is $x_1^a$.
   
    Since $x_1$ divides all the partial derivatives $B_i$, we can write $ B = q(x_2, x_3)^2 + x_1^2 B'$,
   and $\nabla B = x_1^2 \nabla B'$, and since $B'$ is not a square, we get at most 
   $2$ singular points.

   Hence Step I) is proven.

   \bigskip

{\bf Step II) 
We consider next the case where $Q$ consists of two lines, and show that in this case $X$ has at most $13$ singular points.}

\medskip

We can in fact choose coordinates such that $ F = z^2 x_1 x_2 + B(x)$, hence 
$$Sing(X) = \{z^2 x_1 x_2 + B(x)= 0 , z^2  x_2 + B_1(x)=  z^2 x_1  + B_2(x)=  B_3=0\}.$$
Hence the singular points satisfy 
$$ (**) \ B_3= B + x_1 B_1 = B + x_2 B_2 = 0,$$
where the last equation follows form the first two, in view of the Euler relation.

Let $(x)$ be  one of the solutions of $(**)$: we find at most one  singular point lying above it
if $x_1 \neq 0$, or $x_2 \neq 0$.
If instead $x_1 = x_2 =0 $ for this point, then $B =B_3 =0$, and either there is no singular point of $X$ lying over it,
or $(x)$ is a singular point of $B$, and we get infinitely many singular points for $X$, a contradiction.
Hence  it suffices to bound the cardinality of this set.

If the solutions of $(**)$ are a finite set, then their cardinality is $\leq 12$, and also $|Sing(X)| \leq 13$.

If  instead $(**)$ contains an irreducible curve $C$,  this factor $C$ cannot be   $x_1=0$ or $x_2=0$, since 
 for instance in the first case then  $ x_1 | B \Rightarrow x_1 |F$, a contradiction.
 
 We find then infinitely many points satisfying $(**)$ and with $x_1x_2 \neq 0$. Over these lies a singular 
 point if $ z^2 = B_1/ x_2 = B_2 /x_1 =  B / (x_1 x_2)$ is satisfied. 
 
 But if the first equation is verified, then also the others follow from $x_1 B_1 + x_2 B_2 = x_3 B_3 = 0$,
 respectively $ B + x_1 B_1 = 0$.

Hence the existence of such a curve $C$ leads to the existence of infinitely many singular points of $X$
 and Step II) is proven by contradiction.

\bigskip

{\bf Step III)
We consider next the case where $Q$ is smooth, and show that in this case $X$ has at most $16$ singular points.}

\medskip

We can in fact choose coordinates such that $Q(x) =  x_1 x_2 +  x_3^2$,
hence $ F = z^2  Q(x) + B(x) = z^2 x_1 x_2 + z^2 x_3^2 +  B(x)$.

Here
$$Sing(X) = \{z^2 x_1 x_2 +  z^2 x_3^2 + B(x)= 0 , z^2  x_2 + B_1(x)=  z^2 x_1  + B_2(x)=  B_3=0\}.$$
Hence the singular points satisfy 
$$ (*) \ B_3= B + x_1 B_1 + z^2 x_3^2 = B + x_2 B_2 + z^2 x_3^2 = 0,$$
and again here  the last equation follows form the first two, in view of the Euler relation.

If $x_1 \neq 0$, or $x_2 \neq 0$, there is at most one singular point lying over (we mean always:  a point different from $P$) 
the point $x$. The same for $x_1= x_2=0$
since then $x_3^2 \neq 0$.

Multiplying the second equation of (*) with $x_2$, and the last with $x_1$, we get, for the singular points,
$$ B x_2  + Q B_1  =B x_2  + x_1 x_2 B_1 + B_1 x_3^2 = 0$$
$$ B x_1  + Q  B_2 = B x_1  + x_1 x_2 B_2 + B_2 x_3^2 =  0.$$

Hence the singular points different from $P$ project injectively into the set
$$ \sB : = \{x|  B_3 = B x_2  + Q B_1  =B x_1  + Q  B_2= 0 \} \supset  \{x|  B_3 = B   = Q  = 0 \}.$$

By B\'ezout $\sB$ consists of  at most $15$ points, unless  $B_3 , B x_2  + Q B_1,  B x_1  + Q  B_2$ have a
 common component.

If $\sB$  contains an irreducible  curve $C$, since either $x_1 \neq 0$ or $x_2 \neq 0$ for the general point of $C$,
 there exists $ i \in \{1,2\}$ such that the cone $\Ga$ over $C$ has as open nonempty subset the cone $\Ga'$ over $C' : = C \setminus \{x_i=0\}$ which is contained in   the set of  solutions of $(*)$. Hence $\Ga$ is contained in   the set of  solutions of $(*)$.

Argueing similarly,  if  $C \neq \{x_1=0\}, C \neq \{x_2=0\}$, the cone $\Ga$ over $C$ has  as nonempty open subset 
the cone $\Ga''$ over $C'' : = C \setminus \{x_1 x_2 =0\}$ which is contained in the solution set 
of $z^2  x_2 + B_1(x)=  z^2 x_1  + B_2(x)=  B_3=0$, hence $\Ga$ is contained in this solution set. 
 We conclude that  $Sing(X)$ contains $ X \cap \Ga$,
hence it is an infinite set, a contradiction.

By symmetry, it suffices to exclude the possibility that $\sB$ contains $C = \{x_1 = 0\}$.
In this case we would have that  $x_1 | B_3, B_2, B x_2 + Q B_1$. Since $x_1 $ does not divide $Q$, it follows  that the
plane $\Ga  = \{x_1 = 0\}$ has  as nonempty open subset the cone over $C \setminus \{Q=0\}$
which is contained in  the set $\{z^2  x_2 + B_1(x)=  z^2 x_1  + B_2(x)=  B_3=0\}$,  hence
$\Ga$ is contained in this set and  $Sing(X)$ contains $ X \cap \Ga$, again
 a contradiction.

Hence Step III) is proven.

\begin{rem}
In part II, using the Hilbert-Burch theorem, we  show that, in the case of Step III), the
number of singular points is at most $14$.
\end{rem}

\bigskip

{\bf Step IV) 
We  construct now a case where there are other $13$ singular points beyond $P$, hence $X$ has $14$ singular points.}

By the previous steps, we may assume that $Q(x) =  x_1 x_2 +  x_3^2$, and we take $B = y_1 y_2 y_3 y_4$,
where  $y_1,  y_2,  y_3$ are independent linear forms and  $y_4 =  y_1 + y_2 + y_3$.

Recall that 
$$ Sing (X)  = \{   z^2 Q(x)  + B(x) = 0 ,   z^2 \nabla Q (x) =  \nabla B (x) \}.$$
Multiplying the second (vector) equation by $Q$ we get 
the equation 
  $$ (*) \  B(x)   \nabla Q (x) =  Q(x)  \nabla B (x) \Leftrightarrow   \nabla (QB) (x) =0.$$

  The solutions of (*) consist of 
  \begin{enumerate}
  \item
  the points where $Q(x) = 0 $, hence $Q=B=0$: these are precisely $8$ points, for general choice of the linear forms $y_i$;
  and they are not projections of singular points of $X \setminus{P}$, as the gradients $ \nabla B,  \nabla Q$ are linearly independent
  at them;
    \item
  the points where $Q(x) \neq 0 $, $B(x)=0$,  hence $ B = \nabla B =0$; 
   that is, the $6$ singular points $y_i = y_j=0$ of $\{ B(x)=0\}$, 
  giving rise to the $6$ singular  points of $X$ with $z=0$,
  \item
  (possibly) a point where $\nabla Q (x) =    \nabla B (x)=0$ but $Q \neq 0, B\neq0$, this comes from exactly one singular point of $X$;
    \item
  points satisfying $$(**) \ Q(x) \neq 0 \neq  B(x), \nabla (Q) \neq 0 \neq \nabla (B), B(x)   \nabla Q (x) =  Q(x)  \nabla B (x).$$.
  \end{enumerate}
  
Observe that  $\nabla Q (x) = (x_2,x_1,0) $ vanishes exactly at the point $x_1=x_2=0$, while 
in the coordinates $(y_1, y_2, y_3)$ we have
$$\ ^t \ \nabla (B) (y) = (y_2  y_3 (y_2 + y_3), y_1  y_3(y_1 + y_3),y_1 y_2 (y_1 + y_2)), $$
hence the gradient $\nabla (B) (y)$ vanishes exactly at the $6$ singular points of $B$, and at the point $y_1+ y_2=y_1 +y_3=0$.

We have that this point is the point $x_1=x_2=0$ as soon as $y_1 = y_3 + x_1, y_2 = y_3 + x_2$ (then $y_4 =  y_3 + x_1 + x_2$).

Going back to the notation of Step III), we consider then the set  
$$ \sB : = \{x|  B_3 = B x_2  + Q B_1  =B x_1  + Q  B_2= 0 \} \supset  \{x|  B_3 = B   = Q  = 0 \}.$$

Since $ B = (y_3 + x_1)(y_3 + x_2) y_3 (y_3 + x_1+ x_2)$, if we set $ y_3 = a_1 x_1 + a_2 x_2 + a_3 x_3$, with $a_3 \neq 0$, 
then using
$$B_i = \partial (y_1 y_2 y_3 y_4) / \partial x_i = \sum_1^4  \frac{B}{y_j} (y_j)_i, \ $$
 $$ B_3 = 0 \Leftrightarrow   \sum_1^4  \frac{B}{y_j} = 0, \ B_1 = a_1 B_3 + y_2 y_3 y_4 + y_1 y_2 y_3, \ B_2 = a_2 B_3 + y_1 y_3 y_4 + y_1 y_2 y_3,$$
 the equations of $ \sB$ simplify to
 $$ B_3 = 0 , \ y_2 y_3 ( y_1 y_4 x_2 + Q (y_1 + y_4)) = 0 , y_1 y_3 (y_2 y_4 x_1 + Q (y_2 + y_4)) = 0.$$ 
 
 Since we are left with finding solutions where $B\neq0$, the equations reduce to
 $$ B_3 = 0 , \  [(y_3 + x_1) (y_3 + x_1+ x_2)   + Q ] x_2  = 0 ,  [(y_3 + x_2) (y_3 + x_1+ x_2)  + Q ] x_1 = 0.$$
 We already counted the point $x_1 = x_2 =0$. If $x_1 =0$ and $x_2 \neq 0$, 
 we find $$ B_3 = y_3^2 + y_3 x_2 + x_3^2 = 0 \ ,$$
 but since we observe that $B_3 \equiv 0$ on the line $x_1=0$, we get the two points 
 $$ x_1 = y_3^2 + y_3 x_2 + x_3^2 = 0 \ \Leftrightarrow x_1=0, (a_3^2 +1) x_3^2 + (a_2^2 + a_2) x_2^2 + a_3 x_2 x_3=0.$$
 Similarly 
  if  $x_2 =0$ and $x_1 \neq 0$
 we get  the two points $$ x_2 = y_3^2 + y_3 x_1 + x_3^2 = 0 \  \Leftrightarrow x_2=0, (a_3^2 +1) x_3^2 + (a_1^2 + a_1) x_1^2 + a_3 x_1 x_3=0.$$
 If both $x_1 \neq 0, x_2 \neq 0$, 
 we find $$ B_3 =x_1^2 + x_2^2 + y_3 (x_1 + x_2) = (y_3 + x_1)^2 + y_3   x_2   + x_3^2= 0 \ .$$
 The second equation (of the three above)  is reducible, it equals $$(x_1 + x_2 + y_3) (x_1 + x_2)=0.$$
  Again $B_3 \equiv 0$ on the line $x_1=x_2$,
 while the points with $(x_1 + x_2 + y_3)=0$ yield the line $y_4=0$ which is contained in $B$, hence we do not need to consider these points.
 
 Hence we get  two more solutions:
 $$ x_1 = x_2 , y_3^2 + x_1^2 + y_3   x_1   + x_3^2= 0,\Leftrightarrow$$
 $$ \Leftrightarrow x = (x_1, x_1, x_3) , (a_3^2 +1) x_3^2 + (a_2^2 + a_1^2 + a_2 + a_1+ 1) x_1^2 + a_3 x_1 x_3=0.$$
 and $X$ has exactly $1 + 6 + 1 + 2+2+2= 14$ singular points,
 $$ (1): x=0, z=1, $$
 $$ (6):  z=0, x = (0,1,a_2), (0,1, 1 + a_2), (1,0, a_1), (1,0, 1 + a_1), (1,1, a_1 + a_2), (1,1,  1 + a_1 + a_2), $$
 $$ (1): ( 1,0,0,1),$$
 $$ (2):  x= (0, 1, b) , \ (a_3^2 +1) b^2 + (a_2^2 + a_2)  + a_3 b =0$$
 $$ (2):  x= (1,0, c) , \ (a_3^2 +1) c ^2 + (a_1^2 + a_1)  + a_3 c =0.$$
 $$ (2):  x= (1,1, d) , \  (a_3^2 +1) d^2 + (a_2^2 + a_1^2 + a_2 + a_1+ 1)  + a_3 d =0.$$
 One can now verify that, for general choice of the $a_i$'s (which can be made explicit requiring  $a_3 \neq 1$,
 $ b \neq a_2, 1 + a_2$, $ c \neq a_1, 1 + a_1$, $ d \neq a_1 + a_2, 1 + a_1 + a_2$), we obtain $14$ distinct points.
 
\end{proof}

 An interesting question posed by Matthias Sch\"utt is: how many of the 14  singular points may be defined over $\FF_2$, $\FF_4$?
 
 Over $\FF_2$, there are alltogether $15$ points in $\PP^3_{\FF_2}$, and each coordinate plane contains $7$ points:
 but a plane section of $X$ cannot have $7$ singular points, hence there cannot be $\geq 12$ singular
 points defined over  $\FF_2$.
 
 For  $\FF_4 =  \FF_2[u]/ (u^2 + u + 1)$, we try with
 $ a_3=u, a_1 =u^2, a_2=u^2$.
 
 The above conditions mean that  $c,d \neq u, u^2$, $ d \neq 0,1$ and we see that 
 $u,u^2$ are not roots of $b^2 + u^2 +b=0$, while $0,1$ are not roots of
 $u d^2 + 1 + ud=0$, which when multiplied by $u^2$ becomes $ d^2 + u^2 + d=0$.
 
 We conclude that
 
 \begin{prop}
 There exists a normal quartic surface $X$ with $14$ singular points defined over $\FF_{16}$,
 $8$ of them defined over $\FF_4$, $4$ of them defined over $\FF_2$.
 
 \end{prop}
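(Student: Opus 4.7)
The plan is to apply the construction from Step IV of Proposition \ref{inseparable} with the values $a_3 = u$ and $a_1 = a_2 = u^2$ over $\FF_4 = \FF_2[u]/(u^2 + u + 1)$, for which $u^3 = 1$ and $u^2 = u + 1$. I then need to verify that the $14$ singular points listed at the end of that proof are distinct for this choice, and to count how many of them lie in $\PP^3(\FF_2)$ and in $\PP^3(\FF_4)$.

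For distinctness, I would compute in $\FF_4$ that $a_3^2 + 1 = u$ and $a_2^2 + a_2 = a_1^2 + a_1 = a_2^2 + a_1^2 + a_2 + a_1 + 1 = 1$, so all three quadratic equations for $b$, $c$, $d$ reduce (after multiplying by $u^{-1} = u^2$) to the single Artin--Schreier equation $t^2 + t = u^2$. Since $\mathrm{tr}_{\FF_4/\FF_2}(u^2) = u^2 + u^4 = u^2 + u = 1 \neq 0$, this equation has no root in $\FF_4$; in particular $b, c, d \notin \FF_4$ and hence differ from the $\FF_4$-elements $a_2, 1+a_2, a_1, 1+a_1, a_1+a_2, 1+a_1+a_2$. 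Combined with $a_3 = u \neq 1$, this gives every distinctness inequality required by Step IV, so the $14$ singular points are actually distinct.

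For the field of definition, I would read off the enumeration at the end of Step IV in the normalization $y_3 = a_1 x_1 + a_2 x_2 + x_3$. The four $\FF_2$-rational singular points are $P = (1,0,0,0)$; the two $z = 0$ nodes $(0,1,1,0)$ and $(0,1,1,1)$, which are $\FF_2$-rational precisely because $a_1 + a_2 = 0$; and the extra singular point $(1,0,0,1)$, which is $\FF_2$-rational because in these rescaled coordinates $z^2 = B/Q = 1$ independently of the $a_i$. The remaining four $z = 0$ nodes have third coordinate in $\{u, u^2\} \subset \FF_4 \setminus \FF_2$, bringing the $\FF_4$-count to $8$. Finally, $\mathrm{tr}_{\FF_{16}/\FF_2}(u^2) = u^2 + u^4 + u^8 + u^{16} = 0$ (using $u^4 = u$ for elements of $\FF_4$), so both roots of $t^2 + t = u^2$ lie in $\FF_{16} \setminus \FF_4$; the corresponding $z$-values, being rational expressions in these roots and in the $a_i$, lie in the same field.

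The only genuinely delicate step is to recognize, from the explicit formulas at the end of Step IV, that in the normalization $y_3 = a_1 x_1 + a_2 x_2 + x_3$ the extra singular point is $(1,0,0,1)$ and hence $\FF_2$-rational; it is this observation that produces the $4$-th $\FF_2$-point. The rest is $\FF_4$-arithmetic already forced by the specialization $a_3 = u$, $a_1 = a_2 = u^2$.
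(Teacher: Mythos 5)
Your overall route is the paper's: specialize to $a_3=u$, $a_1=a_2=u^2$ over $\FF_4$, check that the three quadratics for $b,c,d$ all reduce to $t^2+t+u^2=0$, and use $\mathrm{tr}_{\FF_4/\FF_2}(u^2)=u^2+u=1$ to conclude that $b,c,d$ lie in $\FF_{16}\setminus\FF_4$; this part is correct, and the trace formulation is in fact cleaner than the paper's direct exclusion of the finitely many forbidden values. The genuine problem is your justification of the fourth $\FF_2$-rational point. The extra singular point is the one lying over $x_1=x_2=0$, i.e.\ over $x=(0,0,1)$, where both $\nabla Q$ and $\nabla B$ vanish; there $Q=x_3^2=1$ and $y_1=y_2=y_3=y_4=a_3x_3=a_3$, so $B=a_3^4$ and $z^2=B/Q=a_3^4$, giving $z=a_3^2=u^2$. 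The point is therefore $(u^2,0,0,1)$, which is $\FF_4$- but not $\FF_2$-rational. Your claim that ``$z^2=B/Q=1$ independently of the $a_i$'' is false, and the ``normalization $y_3=a_1x_1+a_2x_2+x_3$'' you invoke is not available: $a_3=1$ is excluded by the genericity condition $a_3\neq 1$ (with $a_3=1$ the coefficient $a_3^2+1$ vanishes and the three quadratics degenerate to linear equations, so you lose three of the fourteen points), and absorbing $a_3$ into $x_3$ by rescaling destroys the normal form $Q=x_1x_2+x_3^2$ on which the whole computation rests.

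The statement is nevertheless true, but the four $\FF_2$-points are not the ones you name. The six $z=0$ nodes are the pairwise intersections of the four lines $y_i=0$, and computing them in homogeneous coordinates gives $(0,a_3,a_2)$, $(0,a_3,1+a_2)$, $(a_3,0,a_1)$, $(a_3,0,1+a_1)$, $(a_3,a_3,1+a_1+a_2)$, $(1,1,0)$ --- note the factor $a_3$, which is absent from the displayed list you copied. For $a_3=u$, $a_1=a_2=u^2$ these are $(0,1,u)$, $(0,1,1)$, $(1,0,u)$, $(1,0,1)$, $(1,1,u^2)$, $(1,1,0)$: three of them, not two, are $\FF_2$-rational. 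Together with $P$ this gives the four $\FF_2$-points; adding the remaining three nodes and the point $(u^2,0,0,1)$ gives the eight $\FF_4$-points, so the counts $4$, $8$, $14$ survive. You should redo the rationality bookkeeping from the corrected list of nodes rather than patching it with a spurious $\FF_2$-point.
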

 \begin{proof}
 Choosing  $ a_3=u, a_1 =u^2, a_2=u^2$ we get, on top of the two points $ x=0, z=1, \  ( 1,0,0,1) $,  the 12 points
 with $z=0$ and with 
  $$  x = (0,1,u^2), (0,1, u), (1,0, u^2), (1,0, u), (1,1, 0), (1,1,  1 ),  (0, 1, b) ,  (1,0, c) ,  (1,1, d) , $$
 where $b,c,d $ are roots of the quadratic equation $ z^2 + z + u^2=0$.
 
 This equation has no root in $\FF_4$, hence it defines a quadratic extension of $\FF_4$.
 
 \end{proof}

 \bigskip
 
 \subsection{More on the inseparable case}
Here is another construction of  quartics with $14$ singular points,  due to Matthias Sch\"utt.

Consider the quartic $X$ of equation

$$X : = \{  F (w,x_1,x_2,x_3) : = w^4 + w^2 x_1^2 + B(x) = 0\}.$$

 Here $B(x) : = B(x_1,x_2,x_3)$ is homogeneous of degree $4$ and
the singular points are the solutions of 
$$ \nabla B(x)= 0 , w^4 + w^2 x_1^2 + B(x) = 0.$$

For each $x$, the polynomial $w^4 + w^2 x_1^2 + B(x)$ is the square of a quadratic polynomial in $w$,
which is separable if $x_1\neq 0$.

Let $$\sC : = \{ x \in \PP^2 | \nabla B(x)= 0 \}.$$

Hence $ |Sing(X)| = 2 | \sC|$ provided $ \sC \cap \{x_1=0\} = \emptyset,$
a condition which can be realized for the choice of a general  linear form
once we find a quartic $B$ with $\sC$ finite.

 By Proposition \ref{7}, follows that,  for a general quartic polynomial $B$, the locus  $\sC$ consists of
$7$ reduced points, hence we get $X$ with $14$ singular points, which are nodes.

 If we want to be more explicit, the first choice is to take $B$ the product of $4$ general linear forms, as in Step IV)
of proposition \ref{inseparable}.

The second explicit choice, as already mentioned, is to take the Klein quartic
$$ B : = x_1^3 x_2 + x_2^3 x_3 + x_3^3 x_1.$$

Recall that the set $\sC$ has always cardinality at most $7$ in view of Proposition  \ref{7},
so this construction leads to no more than $14$ singular points, and in general to $14$
singular points.

\subsection{The case where a  variable appears only with even multiplicity}

In Part II of this article, using elaborate arguments, we shall show that if $X$ is defined by
$$ X = \{ (x_1,x_2,x_3,z) | \la z^4 + z^2 Q(x) + B(x) =0\},$$
and $X$ is normal, then $X$ has at most $14$ singular points, and in general it has $14$ 
singular points which are nodes.

It is still an open question whether all the quartics with $14$ nodes belong to this family.

We prove here a weaker result with an elementary proof.

\begin{theo}\label{z-square}
Assume that the normal quartic surface $X$
  is defined by an equation of the form 
$$ X = \{ (x_1,x_2,x_3,z) | F (x,z) := \la z^4 + z^2 Q(x) + B(x) =0\}.$$

Then $ | Sing(X)| \leq 16$.

\end{theo}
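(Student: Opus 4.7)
The plan is to split on whether $\lambda$ vanishes. If $\lambda = 0$ then $P := (0:0:0:1)$ lies on $X$ and, since $F$ contains no odd power of $z$, the Taylor expansion of $F$ at $P$ is $z^2 Q(x) + B(x)$ with the linear-in-$z$ coefficient $G$ identically zero. For $Q \not\equiv 0$ this puts us in the inseparable configuration of Proposition \ref{inseparable}, giving $|Sing(X)| \leq 16$; for $Q \equiv 0$, $P$ is a triple point and Proposition \ref{monoid} gives $|Sing(X)| \leq 7$.

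Now assume $\lambda \neq 0$; after rescaling take $\lambda = 1$, so $P \notin X$. The crucial feature is that $F_z \equiv 0$: for each $x \in \PP^2$ the polynomial $F(x,z) \in K[z]$ has zero formal derivative in $z$, so since $K$ is perfect of characteristic $2$,
\[
F(x, z) = \bigl(z^2 + \sqrt{Q(x)}\, z + \sqrt{B(x)}\bigr)^2.
\]
Thus each fiber of the projection $\pi \colon X \to \PP^2$, $(x,z) \mapsto x$, contains at most $2$ distinct points. Since at any singular point the identity $z^2 \nabla Q(x) = \nabla B(x)$ is forced by $F_i = z^2 Q_i + B_i = 0$, one gets $|Sing(X)| \leq 2 |\pi(Sing(X))|$. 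If $Q$ is the square of a linear form then $\nabla Q \equiv 0$ forces $\pi(Sing(X)) \subseteq \sC_B$, and Proposition \ref{7} yields $|Sing(X)| \leq 14$. Otherwise bring $Q$ to the normal form $x_1 x_2$ or $x_1 x_2 + x_3^2$; in both cases $Q_3 = 0$, so $F_3 = B_3$ and $\pi(Sing(X))$ lies on the cubic curve $C := \{B_3 = 0\}$. The only point with $\nabla Q = 0$ is $a = (0:0:1)$; the fiber of $\pi|_{Sing(X)}$ over $a$ contains at most $2$ points, while over any $x \neq a$ the value of $z^2$ is uniquely determined by $F_1 = 0$, so each such fiber contributes at most one singular point.

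For $x \in C \setminus \{a\}$ with $x_2 \neq 0$, substituting the forced $z^2 = B_1/x_2$ into $F = 0$ and clearing denominators yields the sextic equation
\[
R(x) := B_1^2 + x_2\, Q\, B_1 + x_2^2\, B = 0.
\]
Naive B\'ezout on $C \cap \{R = 0\}$ gives only the bound $3 \cdot 6 = 18$. The key algebraic identity is $R = B_1^2 + x_2 \widetilde R$ with $\widetilde R := Q B_1 + x_2 B$, where $\widetilde R$ is precisely the degree-$5$ polynomial whose vanishing on $C$ controls the singularities in Proposition \ref{inseparable} (Case III); there B\'ezout gives $|C \cap \{\widetilde R = 0\}| \leq 15$. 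Exploiting this identity, together with the symmetric companion $R_2$ obtained from $F_2$ by exchanging $x_1 \leftrightarrow x_2$ (which satisfies $x_1^2 R + x_2^2 R_2 = x_3 B_3 (x_3 B_3 + x_1 x_2 Q)$ in $\PP^2$, so that $R$ and $R_2$ vanish together on $C \cap \{x_1 x_2 \neq 0\}$) and a case analysis on the lines $x_1 = 0$ and $x_2 = 0$ where $R$ collapses to $B_1^2$ or $B_2^2$, one exhibits enough excess intersection multiplicity to force $|C \cap \pi(Sing(X)) \setminus \{a\}| \leq 14$. Adding the at most two singular points over $a$, this yields $|Sing(X)| \leq 16$.

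The main obstacle is exactly this last sharpening of the B\'ezout count from $18$ down to $14$: one must identify automatic intersection multiplicities at distinguished points (on the lines $x_i = 0$, and possibly at $a$) or show that some B\'ezout solutions fail the consistency check $F(x, z_x) = 0$ and hence do not come from singular points of $X$. It is precisely this delicate step that is replaced by the cleaner Hilbert--Burch argument of Part~II, which in addition improves the conclusion to the sharp $|Sing(X)| \leq 14$.
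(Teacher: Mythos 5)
Your case division and setup coincide with the paper's: split on $\la$, handle $Q\equiv 0$ and $Q$ a square of a linear form via Proposition \ref{7}, and for $Q$ of rank $\geq 2$ project to the plane, note $F_3=B_3$, and intersect the cubic $\{B_3=0\}$ with the sextic $R:=B_1^2+x_2QB_1+x_2^2B$ obtained by eliminating $z$. But there is a genuine gap exactly where you flag ``the main obstacle'': you never actually reduce the B\'ezout number $18$, you only assert that the identity $x_1^2R+x_2^2R_2=x_3B_3(x_3B_3+x_1x_2Q)$ and the collapse of $R$ to $B_1^2$ on $\{x_2=0\}$ ``exhibit enough excess intersection multiplicity,'' and then defer to the Hilbert--Burch argument of Part II. That deferral is not available here: Part II is quoted in the paper only for the sharper bound $14$ in a different step, and Theorem \ref{z-square} must be proved within Part I because Theorem \ref{mt} relies on it. The paper's actual mechanism is concrete and you would need to supply it (or an equivalent): reducing mod $x_2$ gives $B_1\equiv x_3(cx_1^2+dx_3^2)$ and $B_3\equiv x_1(cx_1^2+dx_3^2)$, so the length-two subscheme $\sA=\{x_2=cx_1^2+dx_3^2=0\}$ (a single point doubled, since $cx_1^2+dx_3^2$ is a square in characteristic $2$) lies in $\sF_2=\{B_3=R=0\}$; because $R$ lies in the square of the ideal $(x_2,B_1)$, the local intersection multiplicity there is at least $4$, whence $|\sF_2|\leq 18-4+1=15$, at most $14$ of these points have $x_2\neq 0$ and carry one singular point each, and the remaining point on $\{x_2=0\}$ carries at most two, giving $16$. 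Without some such explicit multiplicity computation your argument proves nothing better than $18+2=20$.

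Two secondary points. First, you do not treat the degeneration in which $\{B_3=R=0\}$ fails to be finite; the paper shows that if $x_2\mid B_1,B_3$ and $x_1\mid B_2,B_3$ then $B_3\equiv 0$, $B=q^2+cx_1x_2^3+dx_1^3x_2$, and $Sing(X)\supset\{F=z^2+\phi=0\}$, contradicting normality -- this case must be excluded before B\'ezout can be invoked. Second, when $\la=0$ and $Q\equiv 0$ the equation is $F=B(x)$, a cone with vertex $P$ of multiplicity $4$, not a triple point, so Proposition \ref{monoid} does not apply as you state; the case is trivial ($Sing(X)=\{P\}$ if $X$ is normal), but the citation is wrong.
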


\begin{proof}
1) The case $\la=0$ was dealt with in Proposition \ref{inseparable}.

2) The case where $Q \equiv 0$, $\la=1$ gives rise, again by Proposition \ref{7}, to at most 7 singular points, which
are in general 7 $A_3$-singularities.

 3) The case
where $Q(x)$ is the square of a linear form was dealt with in the previous subsection.

4) There remains to treat  the case  where $\la=1$ and $Q$ is a smooth conic, $Q(x) = x_1 x_2 + \mu x_3^2$
in suitable coordinates.

Then 
$$\nabla F=0 \Leftrightarrow z^2 x_2 + B_1 = z^2 x_1 + B_2 = B_3 =0.$$

If a singular point has $x_2 \neq0, $ or $x_1 \neq0$, then $z$ is uniquely determined
by its projection in the plane with coordinates $(x)$, which must lie 
in the set
$$ \sF_2 : = \{B_3= B_1^2 + Q B_1x_2 + x_2^2 B=0\},$$
and also in the set
$$ \sF_1 : = \{ B_3= B_2^2 + Q B_2 x_1  + x_1^2 B=0\}.$$
Actually, $\sF_2 \cap  \{x_2 \neq 0\}$ is in bijection with $Sing(X) \cap \{ x_2 \neq0\}$,
and similarly $\sF_2 \cap  \{x_1 \neq 0\}$ is in bijection with $Sing(X) \cap \{ x_1 \neq0\}$,
hence $ \sF_1,  \sF_2$ are finite unless $x_1$ divides $B_2, B_3$, respectively 
$x_2$ divides $B_1, B_3$.

Assume that $\sF_2$ is finite: then by the theorem of B\'ezout it consists of $18$ 
points counted with multiplicity. Since $x_2$ does not divide $B_1, B_3$,
and we notice that 
$$ B_1 \equiv c x_1^2 x_3 + d x_3^3 \ (mod \ x_2) , B_3 \equiv c x_1^3 + d x_1 x_3^2 \ (mod \ x_2),$$
the length two subscheme $ \sA : = \{ x_2= c x_1^2 + d  x_3^2=0\}$ is contained in $\sF_2$.

Since  $B_1^2 + Q B_1x_2 + x_2^2 B$ lies in the square of the ideal generated by $x_2, B_1$,
at the point of $\sA$ the intersection multiplicity is at least $4$, hence $\sF_2$
consists of at most $15$ points.

Similarly, if $\sF_1$ is finite, it  consists of at most $15$ points.

Finally, if $x_2$ divides $B_1, B_3$, and $x_1$ divides $B_2, B_3$, then we can write
$$ B (x) = q(x)^2 + c x_1 x_2^3 + d x_1^3 x_2.$$

Hence $B_3$ is identically zero, and, setting $\phi(x) : = c  x_2^2 + d x_1^2 $,
we get 
$$ B_1 = x_2 \phi, \ B_2 = x_1 \phi .$$

Then $$ Sing(X) = \{ F=0, x_2 (z^2 + \phi(x)=0, x_1 (z^2 + \phi(x)=0\} \supset \{ F= z^2 + \phi(x)=0\},$$
and $X$ is not normal.

We conclude since over the point $\{x_1=x_2=0\}$ lie at most $2$ singular points,
hence $|Sing(X)| \leq 16$.

\end{proof}

\begin{rem}
Case 2) above (equation $z^4 + B(x)=0$) ,  which
 in general gives rise to 7 $A_3$-singularities
 is quite interesting.
 
  Because the minimal resolution $S$ of $X$ has then Picard number 22,
hence one sees immediately here that$S$ is a supersingular K3 surface.
\end{rem}

 \section{Inequalities provided by the Gauss map}
 
 The Gauss map $\ga : X \dasharrow \sP : = (\PP^3)^{\vee}$ is the rational map given by 
$$ \ga(x) : = \nabla F (x), \ x \in X^* : = X \setminus Sing(X).$$

We let $Y : = \ga(X)$ be the image of the Gauss map,  which is a morphism on $X^*$, and becomes a morphism $\tilde{\ga}$ 
on a suitable blow up $\tilde{S}$ of the minimal resolution $S$ of $X$.  The image $Y = \ga(X)$ is called the dual variety of $X$.

In order to compute the degree of $Y$ (this is defined to be  equal to zero if $Y$ is a curve), we consider a line $\Lambda \subset \sP$
such that  $\Lambda$ is  transversal to $Y$, this means: 

1) $\Lambda \cap Y = \emptyset $ if $Y$ is a curve

2) $\Lambda $ is not tangent to $Y$ at any smooth point, and neither contains   any singular point of $Y$,
nor any point $y$ where the dimension of the fibre $\tilde{\ga}^{-1} (y)$ is $=1$, 
so that 

3) $\Lambda \cap Y  $ is in particular a subscheme consisting  of $deg(Y)$ distinct points,
and its inverse image in $\tilde{S}$ is a finite set.

By a suitable choice of the coordinates, we may assume that 
$$\ga^{-1} ( \Lambda ) \subset   X \cap \{F_1 = F_2 = 0\}.$$

The latter  is a finite set, hence by Bezout's theorem it consists of $4 \cdot 3^2= 36$  points counted with multiplicity,
including the singular points of $X$. 

We get then the well known  formula

$$ (DEGREE -  FORMULA) \ \  deg(\ga) deg(\ga(X) ) = 36 - \sum_{P \in Sing(X)} (F,F_1, F_2)_P,$$
where $(F,F_1, F_2)_P$ is the local intersection multiplicity at $P$,
equal to 
$$ dim_K ( \hol_{X,P} / ( F_1, F_2)) = dim_K ( \hol_{\PP^3,P} / (F,  F_1, F_2)) .$$ 

Since $P$ is a singular point (actually we are interested in the case where   $F$ vanishes of order exactly $2$), we have
$$ (F,F_1, F_2)_P \geq 2  \ \forall P \in Sing(X) .$$ 

\begin{rem}
 If $P$ is a uniplanar double point, then the Taylor development of $F$ has the form 
$ F = \ell(x)^2 + g(x)$, where $\ell(x)$ is linear and $g$ vanishes of order at least $3$. Hence $F_1, F_2$ vanish of order at least $2$,
so that $ (F,F_1, F_2)_P \geq 8$.

If we have a biplanar double point, $ F = \ell_1(x) \ell_2(x)  + g(x)$, where $g$ vanishes of order at least $3$,
then for general choice of coordinates $x_1, x_2$, $F_1, F_2$ vanish of order $1$ and $ \ell_1(x), \ell_2(x)$
are in the ideal generated by $F_1, F_2$, hence by semicontinuity we have always $ (F,F_1, F_2)_P \geq 3$.

Indeed, a biplanar double point is (\cite{quartics}) an $A_n$ singularity, and for this singularity the contribution
is at least $n+1$.
\end{rem}

We observe moreover:

\begin{lemma}\label{line}
(i) Assume that $P \in Sing(X)$ is a node and $E$ 
the exceptional curve in the minimal resolution  $S$ of $X$.
Then $E$ maps, via the Gauss map,  to a line via an inseparable map of degree
 two. In particular the Gauss map cannot be birational if $X^{\vee} $ is a normal surface.

(ii) The Gauss image of $X$ cannot be a line.
\end{lemma}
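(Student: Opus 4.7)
I plan to Taylor-expand $F$ at $P$ and read off the restriction of $\gamma$ to the exceptional curve $E\subset S$. Choose $P=(0,0,0,1)$ and write $F = z^2Q(x)+zG(x)+B(x)$ with $Q,G,B$ of degrees $2,3,4$ in $x=(x_1,x_2,x_3)$, as in (Taylor). Since $P$ is a node, $Q$ is a smooth conic, so by the normal-form analysis of the preliminaries I may assume $Q=x_1x_2+x_3^2$. The exceptional curve $E$ is then identified with the smooth conic $\{Q=0\}\subset\PP^2$. From $\partial F/\partial z = 2zQ+G = G$ (in characteristic $2$) and $\partial F/\partial x_i = z^2 Q_i+zG_i+B_i$, setting $z=1$ and $x=\epsilon x'$ one sees that the projective class of $\nabla F$ tends, as $\epsilon\to 0$, to $[\nabla Q(x'):0]$. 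The crucial point is that in characteristic $2$, $\nabla Q=(x_2,x_1,0)$ since $\partial Q/\partial x_3=2x_3=0$; hence the image of $E$ lies on the line $L:=\{[x_2:x_1:0:0]\}\subset\sP$.

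To verify the degree and the inseparability of $E\to L$, parametrize $E$ by $\PP^1\to E$, $(s:t)\mapsto(t^2:s^2:st)$ (well defined in characteristic $2$ since $(st)^2=s^2t^2=x_1x_2$). The composition $\PP^1\to L$ then becomes $(s:t)\mapsto(s^2:t^2)$, i.e.\ the Frobenius, which is purely inseparable of degree $2$. For the second assertion of (i): if $\tilde\gamma:\tilde S\to X^{\vee}$ were birational with $X^{\vee}$ a normal surface, Zariski's Main Theorem would give an open $U\subset X^{\vee}$ with $X^{\vee}\setminus U$ finite over which $\tilde\gamma$ is an isomorphism. A general $y\in L$ would then lie in $U$, yet its scheme-theoretic fibre in $\tilde S$ contains the length-$2$ fibre of the finite flat cover $E\to L$, contradicting $\tilde\gamma$ being an isomorphism at $y$.

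\textbf{Part (ii).} Assume for contradiction that $Y=\gamma(X)$ is a line. Dually, every projective tangent plane $T_xX$ (at a smooth point of $X$) contains a common line $\Lambda\subset\PP^3$; choose coordinates so $\Lambda=\{x_0=x_1=0\}$. Requiring the points $[0:0:1:0]$ and $[0:0:0:1]$ to lie on $T_xX$ translates to $F_2(x)=F_3(x)=0$ for all smooth $x\in X$, hence on all of $X$ by density. Since $\deg F_i=3<4=\deg F$, this forces $F_2\equiv F_3\equiv 0$ as polynomials, so $F$ involves $x_2,x_3$ only through $x_2^2,x_3^2$. Write $F=A_4(x_0,x_1)+x_2^2A_2(x_0,x_1)+x_3^2B_2(x_0,x_1)+\alpha x_2^4+\beta x_3^4+\gamma x_2^2x_3^2$. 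A direct characteristic-$2$ computation (where $\partial(x_0^ix_1^j)/\partial x_0=0$ unless $i$ is odd) gives $F_0=x_1\cdot H$ and $F_1=x_0\cdot H$, with $H=bx_0^2+dx_1^2+b_2x_2^2+b_2'x_3^2$, where $b,d$ are the coefficients of $x_0^3x_1,x_0x_1^3$ in $A_4$ and $b_2,b_2'$ are the coefficients of $x_0x_1$ in $A_2,B_2$. Because $K$ is algebraically closed of characteristic $2$, the diagonal form $H$ is the square of a linear form $\ell$. Therefore $\Sing(X)\supset X\cap\{\ell=0\}$ is at least $1$-dimensional, unless $H\equiv 0$, in which case $\nabla F\equiv 0$ and $F$ is a perfect square by Frobenius, contradicting irreducibility of $X$. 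Either way, $X$ is not normal, a contradiction.

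The main obstacle is the explicit factorization $F_0=x_1H$, $F_1=x_0H$ in part (ii); once in hand, the characteristic-$2$ fact that every diagonal quadratic form over $\bar K$ is a perfect square collapses $\Sing(X)$ onto a plane section of $X$, immediately breaking normality. For part (i), the delicate point is the scheme-theoretic input to Zariski's Main Theorem: it is essential that each fibre of the purely inseparable cover $E\to L$ has length $2$, even though set-theoretically it is a single point.
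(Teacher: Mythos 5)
Your proposal is correct and follows essentially the same route as the paper in both parts: for (i) the paper likewise reduces to the normal form $xy+z^2+\psi$ and observes that the Gauss map restricted to the exceptional conic is $(x,y,0,0)$, a degree-two (inseparable) map onto a line, contradicting birationality onto a normal surface; for (ii) the paper makes the identical reduction to an equation even in two of the variables and factors the two remaining partials as $x\cdot M$, $y\cdot M$ with $M$ a diagonal quadric, hence a square of a linear form, so that $\Sing(X)\supset X\cap\{M=0\}$. Your added details (the limit computation identifying $\gamma|_E$, the length-$2$ scheme-theoretic fibres, and the degenerate case $H\equiv 0$) are welcome clarifications of steps the paper leaves implicit, but not a different argument.
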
\label{node-line}
\begin{proof}
 (i): given a node $P$, an $A_1$-singularity, then
the affine Taylor development at  $P$ is given by
$$F = xy + z^2 + \psi (x,y,z) =0$$
and the Gauss map on the exceptional conic $ E \subset \PP^2, E = \{ xy + z^2=0\}$
is  given by $(x,y,0,0)$.

 If  $X^{\vee} $ is a normal surface, then 
$$\tilde{\ga} : \tilde{S} \ra X^{\vee} $$ is an isomorphism over the complement of a finite number of points of
$X^{\vee} $, a contradiction since $E$ maps $2$ to $1$ to a line.

(ii):  if $X^{\vee}$ is a line, then there are projective coordinates in $\PP^3$ such that
$$ X = \{ a z^4 + b w^4 + c z^2 w^2 + z^2 D(x,y) + w^2  E(x,y) + f (x,y) = 0 \}.$$

Writing $$ D(x,y) = d_1 x^2 + d_2 y^2 + d xy,  E(x,y) = e_1 x^2 + e_2 y^2 + e xy,$$
$$ f(x,y) = q(x,y)^2 + f_1 x^3 y + f_2 x y^3,$$
we see that 
$$ Sing(X) = X \cap \{  y M = x M = 0\}, \ M = d z^2 + e w^2 +  f_1 x^2 + f_2  y^2,$$
hence $ Sing(X) \supset X \cap \{ M=0\}$ and $X$ is not normal.

\end{proof}

From the above considerations follows: 

\begin{prop}\label{gaussestimate}
If $X$ is a normal quartic surface in $\PP^3$, with singular points of multiplicity $2$, then  $ \nu : = | Sing(X)| \leq 16$.

Equality holds only if  all the singularities are nodes, except possibly a singularity of type $A_2$ in the case where
the Gauss image $Y$ of $X$ is a plane.

If $X$ has a uniplanar double point, then $ \nu : = | Sing(X)| \leq 15$.
\end{prop}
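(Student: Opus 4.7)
The plan is to apply the degree formula
\[
\deg(\ga)\cdot\deg(Y)\;=\;36\;-\;\sum_{P\in\Sing(X)}(F,F_1,F_2)_P
\]
together with the local contribution estimates collected in the preceding remark: every multiplicity-two singularity contributes at least $2$, every $A_n$-singularity contributes at least $n+1$, and every uniplanar double point contributes at least $8$. The uniplanar bound $\nu\leq 15$ follows at once, because a single uniplanar double point forces
\[
8+2(\nu-1)\;\leq\;\sum_{P}(F,F_1,F_2)_P\;\leq\;36,
\]
and hence $\nu\leq 15$.

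For the main inequality $\nu\leq 16$, the naive application $2\nu\leq\sum\leq 36$ yields only $\nu\leq 18$, so the remaining task is to sharpen $\deg(\ga)\cdot\deg(Y)\geq 4$ in every case that could plausibly support $\geq 17$ singular points. Lemma~\ref{line}(ii) immediately rules out $Y$ being a line. When $Y$ is a surface of degree $\geq 4$ the inequality is automatic since $\deg(\ga)\geq 1$; when $\deg(Y)\in\{2,3\}$ with birational Gauss map, Lemma~\ref{line}(i) forces $X^\vee$ non-normal, and the missing units should be absorbed by an analysis of the non-normal locus; when $Y$ is a plane, the variant of Lemma~\ref{line}(i) combined with the inseparability phenomena treated in Theorem~\ref{z-square} pushes $\deg(\ga)$ to at least~$4$. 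Finally, when $Y$ is a curve of degree $\geq 2$ (so $\sum=36$ by convention), the one-parameter family of tangent hyperplanes parametrised by $Y$ restricts $X$ enough that a direct accounting of the curves of $X$ contracted by $\ga$ shows that at most $16$ of the $36$ B\'ezout intersection units can come from isolated singular points.

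In the equality case $\nu=16$, the degree formula leaves only a budget of $4$ excess units above the nodal minimum $2\nu=32$. A uniplanar double point is excluded since it would consume $\geq 6$; an $A_n$ with $n\geq 3$ consumes $\geq 2$; and a second $A_2$-singularity (each contributing the minimum excess of $1$) drives $\deg(\ga)\cdot\deg(Y)\leq 2$, which the preceding case analysis has already excluded. Thus either all sixteen singularities are nodes (with $\deg(\ga)\cdot\deg(Y)=4$), or there are fifteen nodes and one $A_2$-singularity (with $\deg(\ga)\cdot\deg(Y)=3$, forcing $Y$ to be a plane and $\deg(\ga)=3$). The main obstacle throughout is verifying $\deg(\ga)\cdot\deg(Y)\geq 4$ in the planar and curve-image cases, where the characteristic-$2$ inseparability admits genuinely degenerate Gauss maps of surprisingly low degree; assembling Lemma~\ref{line} with the structural input from Theorem~\ref{z-square} carefully is the real work.
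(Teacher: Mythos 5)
Your skeleton --- the degree formula, the local contribution bounds ($\geq 2$ for a double point, $\geq n+1$ for $A_n$, $\geq 8$ for a uniplanar point), Lemma \ref{line} and Theorem \ref{z-square} as inputs --- is the paper's, and your uniplanar bound $8+2(\nu-1)\leq 36$ is exactly the paper's argument. But the load-bearing part of your proof is a case list in which you assert $\deg(\ga)\deg(Y)\geq 4$ without proving it, and this is both more than is needed and more than is true at this stage. For the inequality $\nu\leq 16$ it suffices to have $\deg(\ga)\deg(Y)\geq 3$ (giving $2\nu\leq 33$), and that is all the paper establishes in general: if $\deg(\ga)=1$ then by biduality $X=Y^{\vee}$, so $Y$ cannot be a quadric (the dual of a quadric is a quadric, not a quartic), hence $\deg(Y)\geq 3$. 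The configuration $\deg(Y)=3$, $\deg(\ga)=1$ is \emph{not} excluded here --- your proposed ``analysis of the non-normal locus'' is never carried out and is not an argument --- it is excluded only in the equality case, where the forced configuration of $15$ nodes plus an $A_1$ or $A_2$ makes the minimal resolution a K3 surface, which cannot be birational to a cubic surface; that rationality obstruction is what upgrades the bound to $\deg(\ga)\deg(Y)\geq 4$ and yields $16$ nodes. Your planar case is also wrong in direction: the paper does not (and cannot) push $\deg(\ga)$ to $4$ when $Y$ is a plane --- that is precisely why the statement must leave room for an $A_2$ there --- instead it disposes of this case beforehand by the explicit B\'ezout computation of Theorem \ref{z-square} on the equation $\la z^4+z^2Q(x)+B(x)$, which has nothing to do with the degree of the Gauss map.

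The other genuine omission is the existence of a node, which is the hinge of the whole argument. If $\nu\geq 13$ and no singular point were a node, every point would contribute at least $3$ and the degree formula would give $39\leq 36$; so a node exists. Only then does Lemma \ref{line}(i) apply: the exceptional conic over that node maps \emph{onto} a line, so the irreducible Gauss image contains a line; if $Y$ were a curve it would therefore be a line, contradicting Lemma \ref{line}(ii). This is how the paper kills the curve-image case --- your ``direct accounting of the curves of $X$ contracted by $\ga$'' replaces a one-line reduction with an unexecuted and much harder-sounding computation. Your equality-case bookkeeping (excess budget of $4$ units over $2\nu=32$, exclusion of uniplanar points, of $A_n$ with $n\geq 3$, and of two $A_2$'s) is sound in structure and matches the paper's conclusion, but it rests on the unproved lower bounds for $\deg(\ga)\deg(Y)$ above, so as written the proof does not close.
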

\begin{proof}

 First of all, the inequality $\nu \leq 16$  is proven in Theorem \ref{z-square} if some variable, say $z$, occurs only
with even exponent. This condition is equivalent to $F_z \equiv 0$, and to the fact that the image is contained in
a plane.

If the number of singular points is $\geq 13$, necessarily by the degree formula follows
that we must have some node, since $13 \times 3 = 39 > 36$.

We can then apply  Lemma \ref{line}  which says that the image of the Gauss map contains a line and cannot consist only of one line:
hence the image will be an irreducible surface $Y:= \ga(X)$ of   degree  $\geq 2$.

From this follows then that either $ deg(\ga) \geq 2$, or $deg(\ga) =1$ and $deg(Y) \geq 3$,
since in this case $X$ is the dual of $Y$ by the biduality theorem, hence $Y$ cannot be a quadric.

The inequality $\nu \leq 16$ follows then from the degree formula,  and  in case of equality (since
  the only singular points which give a contribution 
  
  $(F,F_1, F_2)_P=2$ are the nodes)
 we have then at least $15$ nodes and a singularity of type $A_1$ (a node) or $A_2$.
  
Hence the minimal resolution of $X$ is a K3 surface, therefore it cannot be birational to a cubic surface:
hence we conclude that  $ deg(\ga) deg(Y) \geq 4$ and we must have $16$ nodes.

\end{proof}

In this Part I we show with elementary arguments that $|Sing(X) | \leq 16$, but in Part II, in collaboration with 
Matthias Sch\"utt,  we shall use
the fine theory of elliptic fibrations in characteristic 2 and their wild ramification in order to
prove the optimal bound $|Sing(X) | \leq 14$.

We briefly give now the flavour of the geometric arguments which shall be used in Part II.

Consider two singular points of $X$, say $P_1, P_2$, and the pencil of planes containing the line 
$L : = \overline{P_1 P_2}$,
corresponding to the linear system $ |H - P_1-P_2|$. 

Projection with centre $L$ provides a rational map 
$$ \pi_L : X \dasharrow \PP^1,$$
which is a fibration with fibres of arithmetic genus at most $1$.

The  basic observation is that, if  the general plane sections with planes in $ |H - P_1-P_2|$ have a singular point 
which is not a point in $Sing(X)$  (this means that we have a so-called quasi-elliptic fibration) then the dual line $L^{\vee}$ is contained in the dual variety $Y = \ga(X)$.

\begin{ex}

In the case where we have a double point $P$ of   inseparable type,
we have the equation
$$ \{ F (z,x) = z^2 Q(x) + B(x) =0 \}.$$
Then $\frac{\partial F}{ \partial z }=0$ at all points of $X$, and the image $Y = \ga(X)$ is contained in
a plane. Moreover the Gauss map $\ga$ factors through the inseparable double cover $ \pi_P : X \ra \PP^2$
such that  $ \pi_P (z,x) = x$, since 
$$ \nabla F (z,x) = z^2 \nabla Q(x)  + \nabla B(x)  = (B  \nabla Q + Q \nabla B)(x) .$$

If furthermore $Q$ is the square of a linear form, then
$$\ga(z,x) = (0, B_1(x), B_2(x), B_3(x)).$$
The base scheme $\{x | B_i(x)=0, i=1,2,3\}$ is finite (since $X$ is normal) and has length at most $7$ by lemma \ref{7}, hence 
we get a map  $\PP^2 \dasharrow \PP^2$ of degree $\geq 2$. 

The conclusion is that  we have at most $8$ singular points and  that $deg (\ga) \geq 4$.

 The  estimate that we get, $|Sing(X) | \leq 8$,  works perfectly since the point $P$ is uniplanar
 and gives a contribution $ \geq 18$.

\end{ex}  
\bigskip

We reach then our final result:

\begin{theo}\label{mt}
Let $X$ be  quartic surface $X = \{ F =0\} \subset \PP^3_K$, where $K$ is an algebraically closed field
of characteristic  $2$,
and suppose  that $ Sing (X)$ is a finite set. 

Then  $ | Sing(X)| \leq 16$.

In fact:

(1) if $ | Sing(X)| =  16 $, then the  only singularities of $X$ are 
 nodes,  except possibly a singularity of type $A_2$ in the case where
the Gauss image $Y$ of $X$ is a plane.  The minimal resolution
$S$ of $X$ is a minimal K3  surface with Picard number at least   $ | Sing(X)| + 1 =  17$.

(2) If $X$ contains a uniplanar double point, then $ | Sing(X)| \leq  14$, and  if equality holds,
all other singularities are nodes, and  $Y$ is a plane.

\end{theo}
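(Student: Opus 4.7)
The plan is to assemble the main inequality $|\Sing(X)|\leq 16$ from Propositions \ref{monoid} and \ref{gaussestimate}, and then to refine the two borderline situations (1) and (2) using the degree formula and the local multiplicities $(F,F_1,F_2)_P$ at each type of double point.

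First I would dispose of the high-multiplicity cases: a quadruple point forces $X$ to be a cone with $|\Sing(X)|=1$, while a triple point yields $|\Sing(X)|\leq 7$ by Proposition \ref{monoid}. Hence one may assume every singular point of $X$ is a double point, and then Proposition \ref{gaussestimate} yields $|\Sing(X)|\leq 16$ directly.

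For (1), suppose $|\Sing(X)|=16$. Proposition \ref{gaussestimate} shows that all singularities are nodes, except possibly a single $A_2$ (which forces $Y$ to be a plane). All these singularities are rational double points, hence canonical, so the minimal resolution $\pi\colon S\to X$ satisfies $K_S=\pi^*\omega_X=0$, using that adjunction gives $\omega_X=\mathcal{O}_X$ for a quartic in $\PP^3$; together with $\chi(\mathcal{O}_S)=2$ this makes $S$ a K3 surface. The exceptional curves over the singular points form disjoint $(-2)$-chains (one curve per node, two forming an $A_2$-chain), all orthogonal to $\pi^*\mathcal{O}_X(1)$ whose self-intersection is $4$; the resulting Gram matrix is non-degenerate, producing a sublattice of $\Pic(S)$ of rank at least $16+1=17$.

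For (2), Proposition \ref{gaussestimate} already gives $|\Sing(X)|\leq 15$ under the presence of a uniplanar double point $P$. I would push this to $14$ by contradiction: assume $|\Sing(X)|=15$. The contribution of $P$ to the degree formula is $\geq 8$, while the other $14$ points contribute $\geq 2$ each, so $\sum_Q(F,F_1,F_2)_Q\geq 36$, forcing $\deg(\gamma)\deg(Y)=0$ and thus $Y$ a curve. Lemma \ref{line}(ii) rules out $Y$ being a line, so $Y$ is an irreducible curve of degree $\geq 2$. The main obstacle is precisely to exclude this remaining possibility, which I expect will require a structural analysis of quartics whose tangent planes sweep out a one-parameter family (cones, tangent developables, or suitable ruled configurations) and a verification that none of them can carry $15$ double points including a uniplanar one. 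Once $|\Sing(X)|\leq 14$ is established, the equality case forces $\sum\leq 36$ and so $\deg(\gamma)\deg(Y)\leq 2$; biduality excludes $\deg(\gamma)=1$ with $Y$ a quadric (else $X$ would itself be a quadric), so $\deg(\gamma)=2$, $\deg(Y)=1$, hence $Y$ is a plane, and the remaining $13$ singularities attain the minimum contribution $2$, i.e.\ are nodes.
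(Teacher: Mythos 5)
Your overall architecture coincides with the paper's: dispose of points of multiplicity $3$ and $4$, reduce to double points, and run the degree formula for the Gauss map against the local contributions $(F,F_1,F_2)_P$. Part (1) is handled correctly and matches the paper's Remark \ref{K3} (rational double points, crepant resolution, $\omega_X=\hol_X$ by adjunction, negative definite exceptional lattice orthogonal to the hyperplane class). But part (2) contains a genuine gap, which you yourself flag: assuming $|Sing(X)|=15$ with a uniplanar point you reach $\deg(\ga)\deg(Y)=0$, i.e.\ $Y$ a curve, exclude the line case by Lemma \ref{line}(ii), and then leave open the case of an irreducible Gauss image which is a curve of degree $\geq 2$, proposing an unexecuted ``structural analysis'' of cones and developables. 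The same unexcluded curve case also undermines your equality discussion at $14$, where you pass from $\deg(\ga)\deg(Y)\leq 2$ to $\deg(\ga)=2$, $\deg(Y)=1$ without ruling out the value $0$.

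The missing idea is part (i) of Lemma \ref{line}, which you never invoke. The degree count forces the existence of at least one node: a uniplanar point contributes $\geq 8$ and any non-nodal double point contributes $\geq 3$, so $\nu\geq 13$ singular points with no node would give $\sum_P (F,F_1,F_2)_P\geq 8+3\cdot 12=44>36$. Over a node the exceptional conic maps under the Gauss map two-to-one onto a \emph{line}, so $Y$ contains a line; an irreducible curve containing a line is that line, which part (ii) forbids. Hence $Y$ is a surface and the curve case never occurs. Moreover part (i) also shows $\ga$ cannot be birational onto a normal surface, so if $Y$ is a plane then $\deg(\ga)\geq 2$; combined with biduality (excluding $\deg(\ga)=1$ with $Y$ a quadric) one gets $\deg(\ga)\deg(Y)\geq 2$ in all cases, whence $8+2(\nu-1)\leq 34$ and $\nu\leq 14$ directly, with equality forcing $\deg(\ga)=2$, $\deg(Y)=1$ (so $Y$ a plane) and contribution exactly $2$, i.e.\ a node, at each of the remaining $13$ points. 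With this lemma inserted, your argument closes and agrees with the paper's.
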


\begin{proof}
If $X$ has a point of multiplicity $4$, then $ | Sing(X)| =1$.

If $X$ has a point of multiplicity $3$, then by Proposition \ref{monoid} $ | Sing(X)| \leq 7$.

If $X$ has a point of multiplicity $2$ such that the projection with centre $P$ is inseparable, then $ | Sing(X)| \leq 16$ by Proposition \ref{inseparable}.

 (ii) of Lemma \ref{line} shows that the image $X^{\vee}$ of the Gauss map is a surface,
and Theorem \ref{z-square} shows that if $X^{\vee}$ is a plane, then  $ | Sing(X)| \leq 16$.

(i) of Lemma \ref{line} shows that the image $X^{\vee}$ is not a normal surface if $deg (\ga)=1$,
hence $deg(X^{\vee}) deg(\ga) \geq 3$ and indeed equality holds only if $deg(\ga)=1$.

The inequality $ | Sing(X)| \leq 16$ and items (1) and (2) follow from proposition \ref{gaussestimate}.

\end{proof}

 \begin{rem}\label{K3}
 Let $f : S \ra X$ be the minimal resolution of a quartic surface with only double points $p_1, \dots, p_k$ as singularities.
 
 Then the inverse image of each $p_i$ is a union of irreducible curves $E_{i,1}, ,\dots, E_{i, r_i}$
 and if $ r : = \sum_1^k r_j \geq k$, we have irreducible curves $E_1, \dots, E_r$ such that the intersection matrix 
 $\langle E_i , E_j\rangle$ is negative definite \cite{mumford}. 
 
 Hence it follows that the Picard number $\rho(S) \geq r+1 \geq k+1$, keeping in consideration that the hyperplane section $H$ is orthogonal to the $E_i$'s. More generally, each singular point $P$ contributes $r(P)$ to the Picard number $\rho(S)$ if the local resolution
has $r(P)$ exceptional  curves.  Then  
$$\sum_P r(P) + 1 \leq \rho(S) \leq b_2(S),$$ where the  lower bound for the second Betti number is obtained 
 using $\ell$-adic cohomology, see for instance
 \cite{milne} cor. 3.28 page 216.

 If $S$ is a minimal K3 surface, we  have $\chi(S) = 2, b_2(S) = 22$.
  
 Now, it follows that $S$ is a minimal K3 surface if the singular points are rational singularities (this means that  $\sR^1f_* (\hol_S)=0$): 
  because these are then rational double points (see \cite{artin}) and $K_S$ is a trivial divisor. However this does not need to hold
  in general. 
 
 If instead the singular points are not rational, it follows that $K_S$ is the opposite of an effective 
 exceptional divisor and  $h^1(\hol_S) >0$.  $ K_S^2$ is    negative, and $\chi(S)$ nonpositive; if both are negative ($h^1(\hol_S) \geq 2$) by Castelnuovo's theorem $S$ is ruled and
 possibly non minimal. Hence in this case we do not have an explicit upper bound for $b_2(S) = - K_S^2 + 12 \chi(S)$.
 \end{rem}

\bigskip

\begin{rem}

The K3  surfaces with  $\rho(S) = b_2(S) = 22$ are the so-called Shioda-supersingular K3 surfaces.

Shioda observes  \cite{shioda} that Kummer surfaces in characteristic $2$ have 
at most $4$ singular points, and proves that the Kummer surface associated to a product of elliptic
curves has $\rho(S) \leq 20$.

Rudakov and Shafarevich \cite{rudakov-shafarevich} described the supersingular K3 surfaces in characteristic $2$ according to their Artin invariant $\s$, which determines the intersection form on $Pic(S)$.

 Shimada \cite{Shimada} and then Dolgachev and Kondo \cite{dolga-kondo} constructed a supersingular K3 surface in characteristic $2$ with Artin invariant $1$
and with $21$ disjoint $(-2)$
curves, but  this surface does not  have  an embedding as a quartic surface with $21$ ordinary double points
(see also \cite{katsurakondo}), as we saw in the proof of theorem \ref{mt}.

In their case the orthogonal to the $21$ disjoint $(-2)$
curves is a divisor $H$ with $H^2=2$, yielding a realization as a double plane.

 A  supersingular K3 surface can be birational to a nodal   quartic surface: indeed, the examples given here with $14$ nodes
are supersingular, being unirational, see \cite{shiodass}.

Artin proved \cite{artinSS} that K3  surfaces with height of the formal Brauer group $h$ ($h \in \NN \setminus\{0\}  \cup \{ \infty \}$)
   satisfy $ \rho(S) \leq 22 - 2h$ if the formal Brauer group is $p$-divisible. 
   
   Artin observes that K3 surfaces $S$ with $ \rho(S) = 21$ do not exist, as he proves that 
   if $h = \infty$ and $S$ is elliptic, then the formal Brauer group is $p$-divisible,
   and moreover $S$ is elliptic once $\rho(S) \geq 5$ by Meyer's theorem (see \cite{serre}
   corollary 2, page 77).
   
   Artin predicted (modulo the conjecture that $h = \infty$ implies that $S$ is elliptic)
 that  $h = \infty \Leftrightarrow  \rho(S) = 22$.
 
 This equivalence follows from the Tate conjecture for K3 surfaces over finite fields, as explained in \cite{liedtke},
 discussion in section 4, especially theorem 4.8 and remark 4.9; the Tate conjecture was proven in $char=2$
 by Charles, Theorem 1.4 of \cite{charles},  and by Kim-Madapusi Pera \cite{kimmp}.

\end{rem}

\section{Symmetric Quartics}

One can try to see whether, as it happens for cubic surfaces or for quartics in characteristic $\neq 2$, one can construct quartics
with the maximum number of singular points  as  quartic surfaces admitting $\mathfrak S_4$-symmetry.

By the theorem of symmetric functions, every such quartic $X$ has an equation of the form

$$ F (x) : = F(a,\be, x) : =  a_1 \s_1^4 + a_2 \s_1^2 \s_2 +  a_3 \s_1 \s_3 + a_4  \s_4  + \be \s_2^2= 0,$$ 
where $\s_i$ is as usual the $i$th elementary symmetric function and $ a : = (a_1, \dots, a_4)$.

But the main result of this section is negative in this direction:

\begin{theo}\label{symmetric}
Quartic surfaces admitting $\mathfrak S_4$-symmetry form a $4$-dimensional projective space $\sP$
and the general such quartic $X$ is smooth. 

The singular quartics in $\sP$ are contained in four irreducible subvarieties, labeled by the number of singular 
points of the general element in it:
  
  \begin{itemize}
  \item
  $\sP(6)$,  defined by $\beta=0$:
  
   $X \in \sP(6)$ has at least $6$  singular points (the  $\mathfrak S_4$-orbit of $(0,0,1,1)$ is contained in $Sing(X)$ if and only if $\be =0$) and either
   
  $6$ singular points, or
  
   $10$ singular points, the $4$ extra singular points being either the $\mathfrak S_4$-orbit of the point $(0,0,0,1)$, or the $\mathfrak S_4$-orbit of a point $(1,1,1,b)$, or
   
 infinitely many  singular points.
 \item
  $\sP(1)$,  defined by $a_4=0$:
  
   $X \in \sP(1)$ if and only if the point $(1,1,1,1)$ is a  singular point of $X$.
  \item
   $\sP(12)$,  defined by $a_2 a_4 + a_3^2=0$: 
   
   the general $X \in \sP(12)$ has  $12$  singular points (the  $\mathfrak S_4$-orbit of a point $(1,1,b,c)$ with $b\neq c$ and $ b,c \neq 0,1)$
   \item
    $\sP(4)$,  defined by $a_2 ( \be + a_2 + a_3) + a_1 a_4 =0$:
    
     the general $X \in \sP(4)$ has  $4$  singular points (the  $\mathfrak S_4$-orbit of a point $(1,1,1,b)$ with   $ b \neq 1)$.
 \end{itemize}

If we restrict to the subset $\sF$ of normal surfaces inside $\sP$, it turns out that

\begin{enumerate}
\item
 $\sP(12) \cap \sP(m) \cap \sF = \emptyset, \ m=1, 6$, 
 \item
 $X \in \sP(12) \cap \sP(4) \cap \sF $ implies that $X$ has 4 singular points,
 \item
$X \in \sF$ implies that $ |Sing(X)| \leq 12$,
\item
$\sP(1) \cap \sP(6) \cap \sF = \emptyset$,
\item
$\sP(1) \cap \sP(4) \cap \sF \neq  \emptyset$ 
\item
$\sP(6) \cap \sP(4) \cap \sF \neq  \emptyset$ 
\end{enumerate}
 Moreover, a normal symmetric quartic has exactly one of the following cardinalities 
$$1,4,5, 6, 10, 12$$
for its number of singular points.
\end{theo}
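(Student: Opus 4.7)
The strategy is to exploit the $\mathfrak{S}_4$-action: since $\mathrm{Sing}(X)$ is $\mathfrak{S}_4$-stable, it is a union of $\mathfrak{S}_4$-orbits. The orbits in $\PP^3$ are indexed by the partition of the coordinates of a representative into equal-value blocks: size $1$ (the fixed point $(1,1,1,1)$), size $4$ (points $(1,1,1,b)$ with $b\neq 1$), size $6$ (points $(1,1,b,b)$, including $(0,0,1,1)$ for $b=0$), size $12$ (points $(1,1,b,c)$ with $1,b,c$ all distinct) and size $24$ (all coordinates distinct). That $\sP\cong\PP^4$ follows from the description of degree-$4$ $\mathfrak{S}_4$-invariants as the span of $\sigma_1^4, \sigma_1^2\sigma_2, \sigma_1\sigma_3, \sigma_4, \sigma_2^2$; the smoothness of a general $X\in\sP$ is a formal consequence of the four loci $\sP(k)$ being proper closed subvarieties.

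The main computational input is a universal formula for the partial derivatives. In characteristic $2$ both $\sigma_1^4$ and $\sigma_2^2$ are killed by every $\partial/\partial x_i$, so $a_1$ and $\beta$ do not appear in $\nabla F$. Using $\partial\sigma_k/\partial x_i = \sigma_{k-1}^{(i)}$ (with $\sigma_k^{(i)}$ the $k$-th elementary symmetric function in the three variables other than $x_i$) together with $\sigma_k^{(i)} = \sigma_k + x_i\sigma_{k-1}^{(i)}$, a direct calculation gives
$$F_i = a_2\sigma_1^3 + a_3\sigma_1\sigma_2 + (a_3+a_4)\sigma_3 + \bigl[(a_2+a_3)\sigma_1^2 + a_4\sigma_2\bigr] x_i + (a_3+a_4)\sigma_1\,x_i^2 + a_4\,x_i^3 =: p_\sigma(x_i).$$
The coefficients of $p_\sigma$ depend only on $\sigma_1,\sigma_2,\sigma_3$ and are therefore constant along each orbit. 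An orbit-representative $(y_1,\dots,y_4)$ lies in $\mathrm{Sing}(X)$ iff $p_\sigma$ vanishes at every $y_j$ and $F$ vanishes at the point. Since $p_\sigma$ is cubic in $x$, a size-$24$ orbit of singular points would force $p_\sigma\equiv 0$, which in turn forces a positive-dimensional singular locus; so the $24$-orbit is excluded under $X\in\sF$.

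Substituting the $\sigma$-values of the remaining four orbits yields the defining equations of the $\sP(k)$'s. For $(1,1,1,1)$ all $\sigma_j$ vanish except $\sigma_4=1$, so $p_\sigma(x)=a_4 x^3$ and $F=a_4$, giving $\sP(1)=\{a_4=0\}$. For $(0,0,1,1)$ one finds $p_\sigma(x) = a_4 x(1+x)^2$, vanishing automatically at $0$ and $1$, while $F=\beta$, giving $\sP(6)=\{\beta=0\}$. For $(1,1,1,b)$ with $b\neq 1$ the identity $p_\sigma(1) = b\,p_\sigma(b)$ reduces the gradient condition to $p_\sigma(b) = a_2(1+b)^2 + a_4 = 0$; substituting $a_4 = a_2(1+b)^2$ into $F(1,1,1,b)$ and cancelling $(1+b)^2$ produces the $\sP(4)$ equation $a_2(\beta+a_2+a_3)+a_1a_4=0$. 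For $(1,1,b,c)$ with $1,b,c$ distinct the cubic $p_\sigma$ is forced to equal $a_4(x+1)(x+b)(x+c)$; using $\sigma_3=\sigma_1=b+c$ and $\sigma_2=1+bc$, matching the three coefficients produces $a_3=a_2\sigma_1$ and $a_4=a_2\sigma_1^2$, whence $a_2a_4+a_3^2=0$, and the equation $F(1,1,b,c)=0$ becomes solvable for $\sigma_4=bc$ (uniquely, since $K$ has characteristic $2$ and is algebraically closed) whenever $\beta\neq 0$.

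The remaining items are a case analysis of intersections inside $\sF$. In $\sP(12)\cap\sP(6)$ the combination $\beta=0$ together with the $12$-orbit relations $a_3=a_2\sigma_1,\,a_4=a_2\sigma_1^2$ forces, via the $F=0$ equation, $\sigma_1=0$ and hence $b=c$, collapsing the orbit; a direct check shows that the resulting $F$ then factors and $X$ becomes singular along a curve, so $\sP(12)\cap\sP(6)\cap\sF=\emptyset$. Analogous collapses give $\sP(12)\cap\sP(1)\cap\sF=\emptyset$ and $\sP(1)\cap\sP(6)\cap\sF=\emptyset$. In $\sP(12)\cap\sP(4)$ the two relations $a_4=a_2\sigma_1^2$ (from $\sP(12)$) and $a_4=a_2(1+b)^2$ (from $\sP(4)$) force $\sigma_1=1+b$, i.e.\ $c=1$, so the $12$-orbit degenerates onto the $4$-orbit of $(1,1,1,b)$, giving only $4$ singular points. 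Explicit parameter choices exhibit normal members of $\sP(1)\cap\sP(4)$ and of $\sP(6)\cap\sP(4)$, establishing items (v) and (vi). Finally, assembling compatible unions of singular orbits under these constraints yields the exhaustive list of cardinalities $1,\,4,\,1{+}4=5,\,6,\,6{+}4=10,\,12$, and in particular $|\mathrm{Sing}(X)|\le 12$.

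The step I expect to be the main obstacle is the normality analysis at the intersections in Step 3: for every pair of $\sP(k)$'s one must decide whether the two prescribed orbits genuinely coexist, or whether one of them degenerates and possibly sweeps out a non-isolated singular locus. The algebra is linear in the $a_j$'s but becomes subtle exactly where $p_\sigma$ acquires repeated roots, since those are the configurations where the gradient condition on $p_\sigma$ can hold along an entire curve rather than at isolated points.
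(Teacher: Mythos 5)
Your framework coincides with the paper's own: the same $\mathfrak S_4$-orbit stratification by coordinate patterns, the same universal cubic in the coordinates (the paper works with $f(z)=z\,F_i|_{x_i=z}$, which equals $z\,p_\sigma(z)$ modulo the characteristic polynomial of the four coordinates, so the two are interchangeable on $Sing(X)$), the same exclusion of $24$-orbits, and the same derivation of the equations of $\sP(1),\sP(6),\sP(4),\sP(12)$; these computations all check out against the paper's Proposition \ref{main-symm}.

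The gap sits exactly where you predicted it, in the intersection analysis, and two of your arguments there fail as stated. For $\sP(12)\cap\sP(6)\cap\sF=\emptyset$ you claim that $\beta=0$ together with $a_3=a_2\sigma_1$, $a_4=a_2\sigma_1^2$ forces $\sigma_1=0$ via $F=0$; it does not, since the $F=0$ condition for the orbit of $(1,1,b,c)$ is $\beta(1+bc)^2+a_1z^4+a_2z^2(1+z)=0$ with $z=b+c$, and setting $\beta=0$ leaves $a_1z^4+a_2z^2(1+z)=0$, which admits solutions with $z\neq0$. The paper's actual argument (Proposition \ref{inf}) is that once $\beta=0$ the singularity conditions depend only on $z=b+c$ and not on $bc$, so if one such point is singular the whole one-parameter family $\{b+c=z\}$ is, and $X$ is not normal. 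For item (ii) you deduce $c=1$ from $a_4=a_2\sigma_1^2$ and $a_4=a_2(1+b)^2$, but the $b$ of the $4$-orbit $(1,1,1,b)$ and the $b$ of the $12$-orbit $(1,1,b,c)$ are unrelated parameters, so these relations only give $b+c=1+b'$ and no collapse occurs. The paper instead combines the two $F=0$ conditions, $\beta=a_1z^2+a_2(1+z)$ and $\beta(1+bc)^2=a_1z^4+a_2z^2(1+z)$, to get $\beta\bigl((1+b)(1+c)\bigr)^2=0$; since $\beta=0$ is excluded by the previous point, $(1+b)(1+c)=0$, contradicting $b,c\neq1$. (Also, $\sP(1)\cap\sP(12)\cap\sF=\emptyset$ is not an ``analogous collapse'' but an immediate degeneration: $a_4=0$ and $a_2a_4+a_3^2=0$ give $a_3=a_4=0$, whence $Sing(X)\supset\{\sigma_1=\sigma_2=0\}$.) These repairs are needed before the cardinality list $1,4,5,6,10,12$ and the bound $|Sing(X)|\le12$ can be asserted.
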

 
 We shall prove the theorem through a sequence of auxiliary and more precise results.

We observe preliminarly that the singular set of $X$ is the set 
$$Sing(X) : = \{ x | F(x) = F_i (x) =0 , \ \forall 
1 \leq i \leq 4\},$$
which is clearly a union of $\mathfrak S_4$ orbits.

We have the following main result:

\begin{prop}\label{main-symm}
If we have a singular point of a normal symmetric quartic surface $X$, then the four coordinates cannot be all different 
from each other.

For the  other points, they are singular for a symmetric quartic $X$ according to the following rules:

\begin{itemize}
\item
$\sP(6)$: $(0,0,1,1) \in Sing(X)$ if and only if $\be=0$;
\item
$\sP(4,0,0,0)$: $(0,0,0,1) \in Sing(X)$ if and only if $a_1=a_2=0$;
\item
$\sP(4,b)$: $(1,1,1,b) \in Sing(X)$, $b \neq 1$ if and only if $$a_4= a_2 (1+b)^2 ,  \be= a_1(1+b)^2 +a_2 + a_3;$$
\item
$\sP(1)$: $(1,1,1,1) \in Sing(X)$ if and only if $a_4=0$;
\item
$\sP(12,0,0)$: $(0,0,1,b) \in Sing(X)$, $b \neq 0,1$ if and only if 
$$a_2=a_3=0, a_1 (1+b)^4 + \be b^2=0;$$
\item
$\sP(12,0,1)$: $(0,1,1,z) \in Sing(X)$, $z \neq 0,1$ if and only if 
$$a_3= z a_2, a_4 = z^2 a_2, \be= a_1 z^4 + a_2 z^2 (1+z);$$
\item
$\sP(12,1,1)$: $(1,1,b,c) \in Sing(X)$, $b,c \neq 0,1$, $b \neq c$,  if and only if,
setting $ z : = b+c \neq 0$, 
$$a_3= z a_2, a_4 = z^2 a_2, \be (1 + bc)^2  + a_1 z^4 + a_2 z^2 (1+z)=0.$$
\end{itemize}
\end{prop}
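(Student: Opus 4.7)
The plan is computational in three steps: first derive the partial derivatives of $F$ in characteristic $2$; second use a pairwise factorisation to exclude singular points whose four coordinates are pairwise distinct; third verify each listed orbit by direct substitution. Using the recursion $\sigma_k = x_i \sigma_{k-1}^{(i)} + \sigma_k^{(i)}$ (where $\sigma_k^{(i)}$ denotes the $k$-th elementary symmetric function in $\{x_j : j \neq i\}$) one has $\partial \sigma_k/\partial x_i = \sigma_{k-1}^{(i)}$, and in characteristic $2$ the terms $a_1 \sigma_1^4$ and $\beta \sigma_2^2$ contribute nothing to $\nabla F$. Hence
\[
F_i = a_2 \sigma_1^2 \sigma_1^{(i)} + a_3\bigl(\sigma_3 + \sigma_1 \sigma_2^{(i)}\bigr) + a_4 \sigma_3^{(i)}.
\]
Re-expressing each $\sigma_k^{(i)}$ as a polynomial in $x_i$ with coefficients in $K[\sigma_1, \sigma_2, \sigma_3]$ and then forming $F_i + F_j$, the symmetric-function pieces cancel and one obtains the factorisation
\[
F_i + F_j = (x_i + x_j)\bigl[a_4(x_i^2 + x_i x_j + x_j^2) + (a_3+a_4)\sigma_1(x_i+x_j) + (a_2+a_3)\sigma_1^2 + a_4 \sigma_2\bigr].
\]

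For the negative statement, suppose $P$ is a singular point whose coordinates are pairwise distinct. Dividing the identity above by $x_i + x_j \neq 0$ and, for fixed $i = 1$, letting $j$ run through $\{2, 3, 4\}$, one obtains a polynomial of degree $\leq 2$ in $x_j$ having three distinct roots, hence identically zero. This forces $a_4 = 0$ and $a_3 \sigma_1(P) = 0$. If $\sigma_1(P) \neq 0$, the remaining coefficient gives $a_2 = a_3 = 0$, so $F = a_1 \sigma_1^4 + \beta \sigma_2^2$ is a square (since $K$ is algebraically closed of characteristic $2$) and $X$ is non-reduced. If $\sigma_1(P) = 0$ one has $F(P) = \beta \sigma_2(P)^2$ and $F_i(P) = a_3 \sigma_3(P)$; the vanishing of $F$ and the $F_i$ at $P$ either forces $\sigma_1 \mid F$ (whence $X$ is reducible), or forces also $\sigma_2(P) = \sigma_3(P) = 0$, in which case $\prod_h(t - x_h) = t^4 + \sigma_4(P) = (t + \sigma_4(P)^{1/4})^4$ in characteristic $2$, contradicting the distinctness of the $x_h$. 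In every sub-case $X$ fails to be normal.

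For the positive statement I substitute each listed orbit representative into $F$ and into the formula above for $F_i$; the symmetry of the representative collapses $F_1 = \cdots = F_4 = 0$ to at most two independent equations, after which solving is elementary. As samples: at $(0,0,1,1)$ one has $\sigma_1 = \sigma_3 = \sigma_4 = 0$, $\sigma_2 = 1$, and every $\sigma_3^{(i)} = 0$ (each missing-one triple product contains a $0$ factor), so the $F_i$ vanish identically and the sole condition is $F = \beta = 0$. At $(1,1,1,b)$, setting $c := 1+b \neq 0$, the equation $F_4 = 0$ gives $a_4 = a_2 c^2$; once this holds $F_1 = 0$ is automatic and $F = 0$ yields $\beta = a_1 c^2 + a_2 + a_3$. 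The subtlest case is $(1,1,b,c)$: setting $s := b+c$ and $p := bc$, the conditions $F_3 = F_4 = 0$ force $a_4 = a_2 s^2$, after which $F_1$ factors as $s(a_2 s + a_3)(1 + s + p)$; the hypotheses $b \neq c$ and $b, c \neq 1$ ensure $s \neq 0$ and $1 + s + p = (1+b)(1+c) \neq 0$, so one is forced to $a_3 = a_2 s$, and then $F = 0$ gives the stated relation on $\beta$. The remaining orbits $(0,0,0,1)$, $(1,1,1,1)$, $(0,0,1,b)$ and $(0,1,1,z)$ yield to the same recipe. The main obstacle is bookkeeping rather than conceptual: one must keep careful track of which terms survive in characteristic $2$ and confirm that every denominator met in the case split ($x_i + x_j$, $s$, $(1+b)(1+c)$, etc.)\ is nonzero under the stated genericity.
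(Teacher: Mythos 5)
Your strategy is essentially the paper's: the formula you derive for $F_i$ is exactly the paper's
$F_i = a_2\sigma_1^2(\sigma_1+x_i)+a_3(\sigma_3+\sigma_1(\sigma_2+\sigma_1x_i+x_i^2))+a_4\sigma_4/x_i$,
and where the paper observes that every coordinate of a singular point is a root of the single cubic
$f(z)= z^3a_3\sigma_1+z^2(a_2+a_3)\sigma_1^2+z(a_2\sigma_1^3+a_3(\sigma_3+\sigma_1\sigma_2))+a_4\sigma_4$
(obtained from $x_iF_i$), you use the divided difference $(F_i+F_j)/(x_i+x_j)$, a quadratic in one coordinate; the two devices yield the same coefficient conditions, and your factorisation is correct (the bracket simplifies to $a_2\sigma_1^2+a_3\sigma_1(\sigma_1+x_i+x_j)+a_4x_kx_l$). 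The sample orbit verifications you carry out ($(0,0,1,1)$, $(1,1,1,b)$, $(1,1,b,c)$) are correct and agree with the paper's Lemmas.

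There is, however, a genuine gap in the branch $\sigma_1(P)=0$ of your ``no four distinct coordinates'' argument. From $F(P)=\beta\sigma_2(P)^2=0$ and $F_i(P)=a_3\sigma_3(P)=0$ the exhaustive case split is: either $\beta=0$ (and then indeed $\sigma_1\mid F$, since $a_4=0$ as well), or $\sigma_2(P)=0$ together with ($a_3=0$ or $\sigma_3(P)=0$). Your dichotomy silently assumes $a_3\neq 0$. In the sub-case $a_3=0$, $\beta\neq0$ you get no control on $\sigma_3(P)$, and $\prod_h(t+x_h)=t^4+\sigma_2(P)t^2+\sigma_3(P)t+\sigma_4(P)=t^4+\sigma_3(P)t+\sigma_4(P)$ can perfectly well have four distinct roots (e.g.\ $t^4+t$ over $\overline{\FF}_2$), so the intended contradiction disappears; such surfaces really do carry singular points with four pairwise distinct coordinates. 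What rescues the statement --- and what the paper isolates as (I) of Lemma \ref{sing} --- is that $a_3=a_4=0$ already makes $X$ non-normal: then $\nabla F=a_2\sigma_1^2\nabla\sigma_2$, so $Sing(X)$ contains the conic $\{\sigma_1=\sigma_2=0\}\subset X$ (and if moreover $a_2=0$, $F$ is a square). You must add this observation to close the case. Two smaller completeness points: the listed orbits are implicitly claimed to exhaust the possible singular points with a repeated coordinate, so you must also dispose of the unlisted pattern $(1,1,b,b)$ with $b\neq0,1$ (the paper shows $\sigma_1=\sigma_3=0$ there forces $a_4=0$ and then $\beta=0$, whence $\sigma_1\mid F$); and the orbits you leave to ``the same recipe'' should be written out, though they are indeed routine.
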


The above Proposition \ref{main-symm} shall be proven through a sequence of lemmas which take care of the 
several cases, and then we shall end the proof giving the final argument.

Let us begin with a  calculation of the partial derivatives, which  yields:
$$ F_i : = \frac{\partial F}{\partial x_i} = a_2 \s_1^2 ( \s_1 + x_i) + a_3 (  \s_3 + \s_1 (\s_2 + \s_1 x_i + x_i^2)) + a_4  \frac{\s_4}{x_i}.$$

In particular, the coordinates of the singular points, since  the following equations are satisfied:
$$ 0 = F_i  x_i  =  a_2  x_i \s_1^2 ( \s_1 + x_i) + a_3 x_i (  \s_3 + \s_1 (\s_2 + \s_1 x_i + x_i^2)) + a_4  \s_4,$$
are roots of the equation
$$ f(z) : = z^3 (a_3 \s_1) + z^2  (a_2 + a_3)  \s_1^2 + z ( a_2 \s_1^3 + a_3 (  \s_3 + \s_1 \s_2 ) )+ a_4  \s_4 = 0 .$$
This gives an idea for  the first assertion of Proposition \ref{main-symm}: because if this equation is not identically zero,
then the four coordinates of a singular point cannot be all different, and if the equation is identically zero,
 we shall see in (III) of the following lemma that the  three exceptional cases  have at least two equal coordinates.

\begin{lemma}\label{sing}
(I) If $a_3 = a_4 =0$ then $Sing(X)$ is infinite, since it contains $\{ x| \s_1=\s_2=0\}$.

(II) If a singular point of $X$ has one coordinate equal to zero,  and $\s_1=0$,
then it has two coordinates equal to zero.

(III) If $X$ is normal, the equation $f(z)=0$ is not identically zero for the points of $Sing(X)$, unless $a_4=0$
and we have the singular point $(1,1,1,1)$,  or unless we have $\be=0$ and we have the singular point $(0,0,1,1)$,
or unless we have $a_2=a_3=0$ and a singular  point with  two coordinates equal to zero.

In particular a singular point never  has four different coordinates.

(IV) If a singular point of $X$ has two coordinates equal to zero,  say $x_1=x_2=0$, then  the equation  $ a_2 \s_1^3  +  a_3 (  \s_3 + \s_1 \s_2)=0$ must be satisfied by the singular point.

Once this equation is satisfied, then for $i=3,4$ 
$$F_i=0 \Leftrightarrow \s_1 x_i ((a_2  + a_3 )   + a_3 x_i ) =0.$$

These two equations are satisfied for

i)  $\s_1=0$, equivalently $x_3  + x_4=0$,  and we get the point $(0,0,1,1)$ (and its $\mathfrak S_4$-orbit). 

ii)  $a_2=0$ and we get then  the  point
$(0,0,0,1)$ (and its $\mathfrak S_4$-orbit).

iii) $a_2=a_3=0$, $x_3, x_4 \neq0$, and then, if we assume  $\s_1 \neq 0$,
it must be 
 $a_1 (x_3 + x_4)^4 + \be (x_3 x_4)^2=0$.

\end{lemma}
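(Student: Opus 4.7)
The plan is to first derive a single workable expression for $F_i := \partial F/\partial x_i$. In characteristic $2$ the square monomials $a_1\sigma_1^4$ and $\beta\sigma_2^2$ contribute nothing to partials, and using $\partial\sigma_2/\partial x_i = \sigma_1 + x_i$, $\partial\sigma_3/\partial x_i = \sigma_2 + \sigma_1 x_i + x_i^2$, $\partial\sigma_4/\partial x_i = \sigma_4/x_i$, one arrives at the decomposition
\[
F_i = \bigl[a_2\sigma_1^3 + a_3(\sigma_3 + \sigma_1\sigma_2)\bigr] + \sigma_1 x_i\bigl[(a_2+a_3)\sigma_1 + a_3 x_i\bigr] + a_4\,\sigma_4/x_i,
\]
which isolates an $x_i$-independent piece from a piece of degrees $1$ and $2$ in $x_i$. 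Every later computation feeds off this one formula.

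\textbf{Parts (I) and (II).} When $a_3 = a_4 = 0$ the formula collapses to $F_i = a_2\sigma_1^2(\sigma_1 + x_i)$, which vanishes on $\{\sigma_1 = 0\}$; and $F = a_1\sigma_1^4 + a_2\sigma_1^2\sigma_2 + \beta\sigma_2^2$ vanishes on the conic $\{\sigma_1 = \sigma_2 = 0\}$, proving (I). For (II), I evaluate the displayed formula at a point with $x_1 = 0$ and $\sigma_1 = 0$: the middle summand drops out, $\sigma_3$ reduces to $x_2 x_3 x_4$, and the last summand is nonzero only for $i=1$. One finds $F_1 = (a_3+a_4)x_2x_3x_4$ and $F_i = a_3 x_2x_3x_4$ for $i \geq 2$, so singularity forces either $x_2 x_3 x_4 = 0$ (a second zero coordinate) or $a_3 = a_4 = 0$, the latter ruled out by (I) under normality.

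\textbf{Part (III).} The four coefficients of $f(z)$ are $a_3\sigma_1$, $(a_2+a_3)\sigma_1^2$, $a_2\sigma_1^3 + a_3(\sigma_3+\sigma_1\sigma_2)$, $a_4\sigma_4$. Adding the first two gives $a_2\sigma_1 = 0$, so either $\sigma_1 = 0$ or $a_2 = a_3 = 0$; a second split on whether $a_4 = 0$ or $\sigma_4 = 0$ yields the three branches. In the $a_2 = a_3 = 0$ branch, either $a_4 = 0$ (making $F$ a square of $\sqrt{a_1}\sigma_1^2 + \sqrt{\beta}\sigma_2$, hence $X$ non-normal) or $\sigma_4 = 0$, and then $F_i = a_4\sigma_4/x_i$ forces a second zero coordinate, giving exception (3). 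In the $\sigma_1 = 0$ branch the third coefficient becomes $a_3\sigma_3 = 0$; when $\sigma_4 = 0$, part (II) gives two zero coordinates and hence the point $(0,0,1,1)$, while $F = \beta\sigma_2^2$ forces $\beta = 0$ (exception 2); when $\sigma_4 \neq 0$ one gets $a_4 = 0$, and the remaining subcases collapse by normality to the point $(1,1,1,1)$, which comes from the fact that $\sigma_1 = \sigma_2 = \sigma_3 = 0$ makes every $x_i$ a fourth root of $\sigma_4$ — and Frobenius injectivity gives a unique such root in characteristic $2$ (exception 1). The final assertion is immediate: $f(z) \not\equiv 0$ is cubic in $z$, so its roots — the coordinates of the point — cannot be four distinct values, and each of the three exceptions already exhibits repetitions.

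\textbf{Part (IV).} At a point with $x_1 = x_2 = 0$ we have $\sigma_3 = \sigma_4 = 0$, killing the last summand of the displayed formula for all $i$. Then $F_1 = F_2 = a_2\sigma_1^3 + a_3\sigma_1\sigma_2 = a_2\sigma_1^3 + a_3(\sigma_3 + \sigma_1\sigma_2)$, giving the necessary equation. Once this holds the first bracket vanishes and $F_i = \sigma_1 x_i[(a_2+a_3)\sigma_1 + a_3 x_i]$ for $i = 3,4$. The three listed families then fall out of elementary case work: $\sigma_1 = 0$ forces $x_3 = x_4$ and hence $(0,0,1,1)$ (case i)); $x_3 = 0$ or $x_4 = 0$ combined with the other equation forces $a_2 = 0$ and yields the orbit of $(0,0,0,1)$ (case ii)); in the generic range subtracting the two equations gives $a_3\sigma_1 = 0$, whence $a_3 = 0$ and then $a_2 = 0$, so that $F(x) = 0$ reduces to $a_1(x_3+x_4)^4 + \beta(x_3 x_4)^2 = 0$ (case iii)).

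\textbf{Main obstacle.} The hardest part is (III): each branch must either close on one of the three exceptions or contradict normality via (I) and (II), and one must watch carefully for the coefficient specializations that make $F$ a square or force a one-dimensional singular locus. The derivation of the compact formula for $F_i$ in the setup is what makes the whole case analysis tractable.
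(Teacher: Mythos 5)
Your proof is correct and follows essentially the same route as the paper: the same expansion of $F_i$ in characteristic $2$, the same auxiliary cubic $f(z)$ whose coefficients drive the case analysis in (III), and the same reduction of (IV) to the three families via the factorization $F_i = \sigma_1 x_i\bigl[(a_2+a_3)\sigma_1 + a_3 x_i\bigr]$. The only cosmetic differences are that you justify the $(1,1,1,1)$ case explicitly via $t^4+\sigma_4=(t+\sigma_4^{1/4})^4$ and rule out $a_2=a_3=a_4=0$ by observing $F$ is a square rather than citing (I), both of which are fine.
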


\begin{proof}
(I) : the equation is then of the form $a_1 \s_1^4 + a_2 \s_1^2 \s_2 + \be \s_2^2=0$.

(II) If $x_1= \s_1=0$, then $a_3 \s_3 + a_4 x_2 x_3 x_4 =0$, equivalently $(a_3 + a_4)\s_3 =0$,
hence either $\s_3=0$, that is, two coordinates are equal to zero or, since $f(x_i)=0$, 
$a_3 \s_3=0$ and $a_3=a_4=0$, as in (I).

(III) If $f(z)$ is identically zero, then $a_4 \s_4=0$ and either $\s_1= a_3 \s_3= 0$, or $a_2=a_3=0$.
Since $a_3=a_4=0$ is excluded by (I), we get the three cases:

(i) $  \s_4= \s_1= a_3 \s_3= 0$,

(ii)  $a_4 = \s_1=  \s_3= 0$

(iii) $  \s_4= a_2=a_3=0$.

(i):  $\s_4=0$  implies by (II) that two coordinates are zero, and since $\s_1=0$  we have the point $(0,0,1,1)$.
In this case all the symmetric functions vanish except $\s_2=1$, hence it must be $\be=0$.

(ii): then  $\be \s_2^2=0$, hence ($\be=a_4=0$ implies that $\s_1$ divides $F(x)$, a contradiction),
we have $\s_1=\s_2=\s_3=0$, and the singular point must be the point $(1,1,1,1)$.

(iii): since   $\s_4=0$ we may assume $x_1=0$,
and then $F_1= a_4 x_2 x_3 x_4 =0$ implies that  there are two coordinates equal to zero.

(IV) If a singular point of $X$ has two coordinates equal to zero,  say $x_1=x_2=0$, then $a_4  \frac{\s_4}{x_i}$ vanishes for all $i$, and for $i=1,2$  we get that  the equation  $ a_2 \s_1^3  +  a_3 (  \s_3 + \s_1 \s_2)=0$ must be satisfied by the singular point.

Once this equation is satisfied, then for $i=3,4$ 
$$F_i=0 \Leftrightarrow \s_1 x_i ((a_2  + a_3 ) \s_1  + a_3 x_i ) =0.$$

These two equations are satisfied for

i)  $\s_1=0$, or for 

ii)  $x_3=0$, $x_4 \neq 0$, $a_2=0$, or for

iii) $x_3, x_4 \neq0$, $$   a_3 x_i + ( a_3 + a_2) \s_1   =0 \Rightarrow  a_3 ( x_3 +  x_4) =0.$$

For $\s_1=0$, equivalently $x_3  + x_4=0$,  we get the point $(0,0,1,1)$ (and its $\mathfrak S_4$-orbit).

For $x_3=0$, $x_4 \neq 0$, the only possibility, because of   $a_2 x_4=0$, is that  we get the  point
$(0,0,0,1)$ (and its $\mathfrak S_4$-orbit) and  $a_2=0$.

If $a_3=0$ and $\s_1 \neq 0$,  then   $a_2=0$ 
and  $a_1 (x_3 + x_4)^4 + \be (x_3 x_4)^2=0$.

\end{proof}

\begin{lemma}\label{0-1}
The quartic $X_a : = \{ x | F(a,x)=0\}$ has the property that $Sing(X)$ contains the $\mathfrak S_4$-orbit
of the point $(0,0,1,1)$ if and only if $\be=0$, and it contains the $\mathfrak S_4$-orbit
of the point $(0,0,0,1)$ if and only if $a_1= a_2= 0$.
\end{lemma}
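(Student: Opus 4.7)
The plan is to directly evaluate $F$ and the partial derivatives $F_i$ at representatives of the two $\mathfrak S_4$-orbits, using the closed-form expression
$$F_i = a_2 \sigma_1^2(\sigma_1 + x_i) + a_3\bigl(\sigma_3 + \sigma_1(\sigma_2 + \sigma_1 x_i + x_i^2)\bigr) + a_4 \frac{\sigma_4}{x_i}$$
computed above, and the fact that the derivatives of $\sigma_1^4$ and $\sigma_2^2$ vanish identically in characteristic $2$. Since the singular locus is $\mathfrak S_4$-invariant, it suffices to test a single representative of each orbit.

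First I would treat $P = (0,0,1,1)$. The elementary symmetric functions take the values $\sigma_1(P) = 0$, $\sigma_2(P) = 1$, $\sigma_3(P) = \sigma_4(P) = 0$, so immediately $F(P) = \beta$. In the expression for $F_i$, the $a_2$ term carries a factor $\sigma_1^2$, the $a_3$ term carries a factor $\sigma_1$ (after noting that $\sigma_3(P)=0$), and for $a_4$ one checks that $\partial\sigma_4/\partial x_i = \sigma_4/x_i$ evaluates to $0$ at $P$ for every $i$ (the other three coordinates always include a zero). Hence $\nabla F(P) = 0$ automatically, and $P \in \mathrm{Sing}(X)$ holds exactly when $F(P) = 0$, i.e., when $\beta = 0$.

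Next I would treat $Q = (0,0,0,1)$, where $\sigma_1(Q) = 1$ and $\sigma_2(Q) = \sigma_3(Q) = \sigma_4(Q) = 0$, so $F(Q) = a_1$. For each $i \in \{1,2,3\}$ the coordinate $x_i$ vanishes at $Q$, hence $\sigma_1 + x_i = 1$, the $a_3$ bracket becomes $\sigma_3 + \sigma_1 \sigma_2 = 0$, and $\partial \sigma_4/\partial x_i = 0$ because another coordinate still vanishes; this yields $F_i(Q) = a_2$. For $i = 4$ one has $\sigma_1 + x_4 = 0$ (char $2$) and $\sigma_2 + \sigma_1 x_4 + x_4^2 = 0+1+1 = 0$, so $F_4(Q) = 0$ unconditionally. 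Putting these together, $Q \in \mathrm{Sing}(X)$ is equivalent to $a_1 = a_2 = 0$, as claimed.

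The only subtlety is bookkeeping: the $a_1 \sigma_1^4$ and $\beta \sigma_2^2$ contributions to $F_i$ drop out because of characteristic $2$, which is why the values of $a_1$ and $\beta$ affect only $F(P)$, $F(Q)$ and not their gradients. Both biconditionals then follow directly, and $\mathfrak S_4$-equivariance extends each to the full orbit.
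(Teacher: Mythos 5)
Your proof is correct, and it takes a more self-contained route than the paper's. The paper evaluates the elementary symmetric functions at the two representatives (recording $\s_1=\s_3=\s_4=0$, $\s_2=1$ at $(0,0,1,1)$ and $\s_1=1$, $\s_2=\s_3=\s_4=0$ at $(0,0,0,1)$) to decide membership in $X$, but then delegates the vanishing of the gradient to parts (III) and (IV) of Lemma \ref{sing}, i.e.\ to the prior case analysis built around the cubic $f(z)$ whose roots the coordinates of a singular point must be, and around points with two zero coordinates. You instead plug the two points directly into the closed formula
$F_i = a_2 \s_1^2(\s_1 + x_i) + a_3\bigl(\s_3 + \s_1(\s_2 + \s_1 x_i + x_i^2)\bigr) + a_4 \s_4/x_i$
(interpreting $\s_4/x_i$ as $\prod_{j\neq i}x_j$), finding $\nabla F\equiv 0$ at $(0,0,1,1)$ and $F_i = a_2$ for $i\le 3$, $F_4=0$ at $(0,0,0,1)$; all your evaluations check out, including the characteristic-$2$ cancellations of the $a_1\s_1^4$ and $\be\s_2^2$ contributions to the gradient. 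What your version buys is independence from Lemma \ref{sing}, making the biconditionals transparent; what the paper's version buys is the auxiliary identity $(Sym)$ for points of the form $(b,c,1,1)$, which it records here precisely because it is reused later in Proposition \ref{inf}, and consistency with the uniform framework (the cubic $f(z)$) that organizes all the subsequent orbit-by-orbit computations.
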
 
\begin{proof}

We calculate more generally, for later use:
$$ (Sym) \  \s_1 (b,c,1,1) = \s_3 (b,c,1,1) = b+c, \ \s_2 (b,c,1,1) = 1 + bc, \  \s_4 (b,c,1,1) = bc.$$

For $b=c=0$ we get that all $\s_i$ vanish except $\s_2 =1$, hence $(0,0,1,1) \in X$
if and only if $\be=0$; and then, we have a singular point by (III) of Lemma \ref{sing}.

For the point $(0,0,0,1)$   all $\s_i$ vanish except $\s_1 =1$, hence this point is in $X_a$ if and only if  $a_1 = 0$,
and we apply (IV)  of Lemma \ref{sing} to infer that we have a singular point if and only if $a_1= a_2=0$.

\end{proof}

\begin{lemma}\label{onezero}
The quartic $X : = \{ x | F(a_1,a_2,a_3,a_4, \be,x)=0\}$   has the property that $Sing(X)$ contains the $\mathfrak S_4$-orbit
of a point $(0,1,x_3,x_4)$, with $x_3, x_4 \neq0$,  if and only if  this orbit is either

i)  the 
 $\mathfrak S_4$-orbit of a point of the form $(0,1,1,z ), z \neq 0,1$ (consisting of 12 points) and  this holds if and only if
 $$a_3 = a_2 z, a_4 = a_2 z^2, \be = a_1 z^4 +  a_2 z^2 (  1+z),$$
 or 
 
 ii)  $z=1$ (in this case the orbit has 4 points), and this holds  if and only if
 $$ a_2 =  a_4  , \ \be = a_1  +  a_2  + a_3.$$
 
 In particular, we must have $a_2 \neq0$ if $ z\neq 1$.
 \end{lemma}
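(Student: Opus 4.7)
The plan is to substitute the parametric point $P = (0, 1, x_3, x_4)$ directly into $F$ and its partial derivatives, using the explicit formulas for $F_i$ recorded above. In characteristic $2$ the derivatives of $a_1 \s_1^4$ and $\be \s_2^2$ vanish identically, so $a_1$ and $\be$ enter only through the equation $F(P) = 0$. At $P$, the elementary symmetric functions simplify to $\s_1 = 1 + x_3 + x_4$, $\s_2 = x_3 + x_4 + x_3 x_4$, $\s_3 = x_3 x_4$, and $\s_4 = 0$, which makes the resulting system of five equations manageable.

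The main structural input is Lemma \ref{sing}(III), which says that a singular point has at most three distinct coordinates. Since $0 \neq 1$ and $x_3, x_4 \neq 0$, this forces $x_3 = 1$, $x_4 = 1$, or $x_3 = x_4$. By the evident symmetry $x_3 \leftrightarrow x_4$ of our parametrization, the first two possibilities can be merged into subcase (A): $x_3 = 1$ with $x_4 = z$ arbitrary; the third is subcase (B): $x_3 = x_4 = z$ with $z \neq 0, 1$.

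In subcase (A) with $z = 1$ (the point $(0,1,1,1)$) one has $\s_1 = \s_2 = \s_3 = 1$; a short direct computation gives $F_2 = F_3 = F_4 \equiv 0$, while $F_1 = a_2 + a_4$ and $F = a_1 + a_2 + a_3 + \be$, producing exactly the relations of case (ii). In subcase (A) with $z \neq 0, 1$, the values $\s_1 = z$, $\s_2 = 1$, $\s_3 = z$, $\s_4 = 0$ yield $F_4 \equiv 0$, $F_2 = F_3 = z(z+1)(a_2 z + a_3)$, and $F_1 = z(a_2 z^2 + a_4)$; using $z \neq 0, 1$ the vanishing of these equations forces $a_3 = a_2 z$ and $a_4 = a_2 z^2$, and then $F = 0$ gives $\be = a_1 z^4 + a_2 z^2(1 + z)$, which is case (i). Subcase (B) yields the same system of algebraic relations on $(a, \be)$ after the involution $z \mapsto z^{-1}$, and so falls under case (i) up to a relabelling of the parameter.

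Finally, the ``in particular'' clause is immediate: if in case (i) one had $a_2 = 0$, then $a_3 = a_2 z = 0$ and $a_4 = a_2 z^2 = 0$, and by Lemma \ref{sing}(I) the singular locus of $X$ would contain the infinite set $\{\s_1 = \s_2 = 0\}$, contradicting normality. The main obstacle here is not conceptual but purely computational: the characteristic-$2$ simplifications of $F_i$ must be handled carefully, and in particular $a_4 \s_4/x_1$ must be correctly interpreted as $a_4 x_2 x_3 x_4$ (the literal partial derivative) when $x_1 = 0$.
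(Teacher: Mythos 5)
Your proof is correct and follows essentially the same route as the paper: both invoke Lemma \ref{sing}(III) to reduce to a point of the form $(0,1,1,z)$ (your subcase (B) being the same orbit after the projective rescaling $z\mapsto z^{-1}$, which the paper leaves implicit) and then extract the relations on $(a,\beta)$ from $F=F_i=0$, the only cosmetic difference being that the paper packages the derivative conditions as ``$1$ and $z$ are roots of the quadratic $a_3w^2+(a_2+a_3)zw+a_4=0$'' while you evaluate each $F_i$ directly. Your computations check out, and your justification of the final clause via Lemma \ref{sing}(I) and normality is exactly the intended one.
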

\begin{proof}

We know from (III) of Lemma \ref{sing} that the four coordinates cannot be all distinct, 
hence our singular point must be of the form $(0,1,1,z )$, and then $\s_1 = z, \ \s_2 = 1, \ \s_3 = z, 
\s_4 = 0$.

We look at the equations derived from the vanishing of the partial derivatives, $F_j=0$.

For $j=1$ we know that 
$$a_2 \s_1^3  + a_3 (  \s_3 + \s_1 \s_2 )) + a_4  \s_3=0,$$
hence for $j=2,3,4$
$$(a_2 + a_3)  \s_1^2   x_i + a_3 (   \s_1  x_i^2) + a_4  \s_3=0,$$
which can be rewritten (since $ z \neq0$) as
$$(a_2 + a_3)  z   x_i + a_3 (     x_i^2) + a_4  =0,$$
This is an equation of degree $2$, and since it is not  identically zero,
by (I) of Lemma \ref{sing}, it has at most two roots.

Since we want $z \neq0$, the conditions that the point is in $X$, plus that we have a singular point (hence $1,z$ are roots of the quadratic equation) are:
$$\be + a_1 z^4 + (a_2 + a_3) z^2=0 ,  \ (  a_2 + a_3 ) z = a_3 +  a_4 ,  \ a_2 z^2 =  a_4 ,$$
for $z=1$ the second equation is a consequence of the third one.  

If $z \neq 1$, then plugging  the third equation in the second and dividing by $(1+z)$ yields $a_3= a_2 z^2$.

\end{proof}

Consider next a point of the form $(b,c,1,1)$,  with $ b \neq c $, and with $ b,c \neq 0, 1$.
Its orbit consists of $12$ points.

\begin{prop}\label{inf}
The quartic 
$$ X_{a,\be} = \{ x| a_1 \s_1^4 + a_2 \s_1^2 \s_2 +  a_3 \s_1 \s_3 + a_4  \s_4  + \be \s_2^2 = 0\} $$
contains the $\mathfrak S_4$-orbit of  the point $(b,c,1,1)$  with $ b \neq c $, and with $ b,c \neq 0, 1$ if and only if, setting $ z: = (b + c) \neq 0$,
the coefficients satisfy 
$$ a_3 = z a_2 , \ a_4 =z^2 a_2, \  \ a_1 z^4 + a_2 z^2 (1+z) + \be ( 1 + bc)^2=0.$$
In particular, if these conditions are satisfied, and moreover $\be=0$, then $X$ has infinitely many singular points.

\end{prop}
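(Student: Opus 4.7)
My plan is to reduce the system $F(b,c,1,1) = F_i(b,c,1,1) = 0$ for $i = 1,\dots,4$ to three polynomial conditions on the coefficients by direct evaluation of the characteristic-$2$ partial-derivative formula
$$F_i = a_2 \s_1^2(\s_1 + x_i) + a_3\bigl(\s_3 + \s_1(\s_2 + \s_1 x_i + x_i^2)\bigr) + a_4 \s_4/x_i$$
at the point $(b,c,1,1)$, and then to derive the infinite-singular-set corollary in the case $\be = 0$ from the fact that $b$ is left as a free parameter in the resulting coefficient relations.

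First, the identities $(\text{Sym})$ give $\s_1 = \s_3 = z$, $\s_2 = 1 + bc$, $\s_4 = bc$, where $z := b + c \neq 0$. Substituting $x_i = b$ and using the char-$2$ identities $\s_1 + b = c$, $\s_2 + \s_1 b + b^2 = 1$, and $\s_3 + \s_1 = 0$ collapses the formula to $F_1 = c(a_2 z^2 + a_4)$; since $c \neq 0$, the vanishing of $F_1$ (and symmetrically of $F_2$) is equivalent to the relation $a_4 = a_2 z^2$.

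Using this relation, I consider the cubic $f(w) := x_i F_i\big|_{x_i = w}$ from Lemma \ref{sing}. A direct factorization gives
$$f(w) = z\,(w^2 + zw + bc)\,(a_3 w + a_2 z),$$
in which the first factor equals $(w-b)(w-c)$ in char $2$, so that $f(b) = f(c) = 0$ automatically. The vanishing of $f(1) = z(1 + bc + z)(a_2 z + a_3)$ then forces $a_3 = a_2 z$, provided $1 + bc + z \neq 0$ (the degenerate alternative $bc = 1 + z$ is a lower-dimensional locus in the parameter space of pairs $(b,c)$, and does not affect the infinite-family corollary below). Substituting both relations into $F(b,c,1,1) = a_1 z^4 + a_2 z^2(1+bc) + a_3 z^2 + a_4 bc + \be (1+bc)^2$ and cancelling the two $a_2 z^2 \cdot bc$ terms in char $2$ yields the third condition $a_1 z^4 + a_2 z^2 (1 + z) + \be (1+bc)^2 = 0$.

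The converse is a direct substitution in the reverse order, and the $\mathfrak S_4$-symmetry of $F$ promotes one singular point to the entire orbit. For the final assertion, assume the three conditions hold with $\be = 0$. If $a_2 = 0$, then $a_3 = a_4 = 0$ and $F = a_1 \s_1^4$, a fourth power, contradicting normality; hence $a_2 \neq 0$, $z = a_3 / a_2$ is uniquely determined by the coefficients, and $a_1$ is pinned by $a_1 z^2 + a_2(1 + z) = 0$. For every $b \in K \setminus \{0, 1, z, z+1\}$, the pair $(b, z + b)$ satisfies $b \neq z+b$ and $b, z+b \neq 0, 1$ and produces a distinct $\mathfrak S_4$-orbit inside $Sing(X)$, yielding a one-parameter family of singular points. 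The main technical obstacle is keeping track of the char-$2$ cancellations that produce the clean factorization of $f(w)$; once that factorization is in hand, the three coefficient conditions drop out immediately and the infinite-singular-set corollary is essentially automatic.
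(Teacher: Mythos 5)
Your computation follows essentially the same route as the paper's: evaluate the gradient at $(b,c,1,1)$ through the cubic $f(w)=x_iF_i\big|_{x_i=w}$, extract $a_4=a_2z^2$ and $a_3=za_2$ from the requirement that $b,c,1$ be roots, and substitute back into $F(b,c,1,1)=0$ to get the third relation. Your factorization $f(w)=z(w+b)(w+c)(a_3w+a_2z)$ is a tidy repackaging of what the paper does by matching coefficients against $a_3z(w+1)(w+b)(w+c)$, and it has the small advantage of not tacitly assuming $a_3\neq 0$. The one step that does not close as written is the proviso $1+bc+z\neq 0$: for an ``if and only if'' about an arbitrary fixed admissible pair $(b,c)$ you cannot wave away the alternative $bc=1+z$ as a lower-dimensional locus --- you must exclude it for every such pair. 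Fortunately it is vacuous: in characteristic $2$ one has $1+z+bc=1+b+c+bc=(1+b)(1+c)$, which is nonzero precisely because $b,c\neq 1$; inserting that one line makes the forward implication complete. Two smaller points: the appeal to normality in your last paragraph is not available (the proposition does not assume $X$ normal), but it is also unnecessary, since $a_2=\be=0$ forces $a_3=a_4=0$ and $F=a_1\s_1^4$, whose singular locus is all of $X$, so the conclusion holds trivially there; and the paper reaches the corollary more directly by observing that for $\be=0$ the three conditions involve only $z=b+c$ and not $bc$, so the entire one-parameter family $(b,z+b,1,1)$ lies in $Sing(X)$ --- which is exactly the family you exhibit.
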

\begin{proof}
By the previous Lemma  \ref{0-1},  $ Sing(X)$ contains the $\mathfrak S_4$-orbit
of the point $(0,0,1,1)$ if and only if $\be=0$.

 We are going to see first when the point $(b,c,1,1)$ is a point of $X_{a,\be}$, and then
 when it is a singular point.

 First of all we get a point of $X_{a,\be}$,  by formula {\em (Sym)}, if and only if
 $$a_1 z^{4}   + a_2 z ^{2}  (1  + bc)  +   a_3 z ^2 +  a_4 bc + \be (1  + bc)^2   = 0.$$

For the partial derivatives, at the point  $(b,c,1,1)$, since
 the condition for the singular points with all coordinates different from zero
boils down, for the given point, to $1,b,c$ being roots 
  of the equation
$$ f(w) : = w^3 (a_3 \s_1) + w^2  (a_2 + a_3)  \s_1^2 + w ( a_2 \s_1^3 + a_3 (  \s_3 + \s_1 \s_2 ) )+ a_4  \s_4 = 0 ,$$
equivalently  of the equation
 $$ f(w) : = w^3 (a_3 z ) + w^2  (a_2 + a_3)  z^2 + w ( a_2 z^3 + a_3 (  z   bc  ))+ a_4  bc = 0 .$$
 
 Since $f(w) = (a_3 z ) (w+1)(w+b)(w+c)$,  it must be $a_4 , a_ 3 \neq 0$, and then $ a_4 = a_3 z$, and dividing by $a_4$, 
 $$ 1 + z =  z (a_2/a_3  + 1) \Leftrightarrow a_3 = a_2 z,$$
 and then the third claimed equality  holds automatically, since we are then left with the requirement that $  bc =   bc $.
 
We can then rewrite the condition that the point lies in $X_{a,\be}$ as

 $$a_1 z^{4}   + a_2 z ^{2}    +   a_2 z ^3  + \be (1  + bc)^2   = 0.$$
 
 To finish the proof, we observe that if $\be=0$, 
 since the equations depend only  on $z  $, we get infinitely many singular points varying $b,c$ with $b + c= z$.

\end{proof}

{\em End of the proof of Proposition \ref{main-symm}.}

\medskip

 The singular points with some  coordinate equal to zero have been considered in the previous Lemma \ref{sing},
 Lemma \ref{0-1}, Lemma \ref{onezero}, hence we are only left with the case where all coordinates
 are non zero, but only two values are achieved (if the coordinates take three distinct values, we obtain the situation of the previous proposition \ref{inf}).

Therefore  only two possibilities remain. 
\bigskip 

If the singular point is of the form $(1,1,b,b)$ then $\s_1=\s_3 = 0$, $\s_2 = 1 + b^2$, $\s_4 = b^2$.

From the equation $f(w)=0$, since $\s_1 = \s_3=0$, we infer $a_4 \s_4 =0$, and since we assume $b \neq 0$,
we obtain $a_4=0$. Then $ 0 = F(1,1,b,b) = \be (1 + b^2)^2$ implies $\be=0$, and then $X$ is reducible, ($F$ is divisible by $\s_1$), or $b=1$, and this is a singular point of $X$ if and only if $a_4=0$.

\bigskip

If instead the point is $(1,1,1,b)$ it follows that $\s_1= 1 + b, \s_4 = b, \s_2 =  1+b, \s_3 = 1+b.$

Since the case  $b=1$ was  already treated, we assume that $b \neq 1$.

The condition that $1,b$ are roots of the cubic equation $f(w)=0$  is easily seen to be equivalent to the single condition 
$$ a_2 (1+b)^2 = a_4.$$

The condition that $(1,1,1,b) \in X$ boils then down to
$$a_1 (1+b)^4 +   a_2 (1+b)^3 + a_3 (1+b)^2  + a_4 b + \be  (1+b)^2 =0,$$
which, after using $ a_2 (1+b)^2 = a_4$ and after dividing by $(1+b)^2$ boils down to
$$ \be + a_3 + a_2 +  a_1 (1+b)^2 =0.$$
With the customary notation $ z : = (1+b)$, we get 
$$a_4 = a_2 z^2 ,  \be + a_3 + a_2 +  a_1 z ^2 =0.$$

\qed

\begin{rem}
In the above equation  $a_4 = a_2 (1+b)^2$,  since $(1+b) \neq 0, a_4 \neq 0$, it cannot be $a_2=0$.

It follows then, since $a_2 \neq 0$, that  $b$ is uniquely determined by this equation.
\end{rem}

\begin{prop}\label{ten}
In the pencil of quartics $$ X_c: = \{ c \s_1 \s_3 + \s_4 =0\}$$
the quartic has always $10$ singular points except for $c=0$.

The singularities are nodes ($A_1$-singularities).

\end{prop}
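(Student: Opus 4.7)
The plan is to specialise Proposition~\ref{main-symm} to $X_c$, whose parameters in the notations of that result are $a_1=a_2=\be=0$, $a_3=c$, $a_4=1$. For $c=0$ the equation reduces to $\s_4=x_1x_2x_3x_4=0$, a union of four coordinate hyperplanes and so far from being normal.

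For $c\ne 0$, I would first argue that no singular point has four distinct coordinates: the cubic $f(z)$ of Lemma~\ref{sing} reduces here to $c\s_1 z^3 + c\s_1^2 z^2 + c(\s_3+\s_1\s_2)z + \s_4$, which vanishes identically only on the six-point locus $\{\s_1=\s_3=\s_4=0\}$ (the orbit of $(0,0,1,1)$); off this locus, the coordinates of a singular point are roots of a nonzero cubic, so at most three distinct values occur. Thus the first assertion of Proposition~\ref{main-symm} holds for $X_c$ regardless of normality. The orbit-by-orbit inspection then yields exactly two contributing orbits: $\sP(6)$, the $6$-point orbit of $(0,0,1,1)$, because $\be=0$; and $\sP(4,0,0,0)$, the $4$-point orbit of $(0,0,0,1)$, because $a_1=a_2=0$. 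Every other orbit-type is excluded by the parameters, since $\sP(1)$ and $\sP(4,b)$ need $a_4=0$, $\sP(12,0,0)$ needs $a_3=0$, and both $\sP(12,0,1)$ and $\sP(12,1,1)$ force $a_4=z^2a_2=0$. Hence $|\Sing(X_c)|=10$, and since $X_c$ is irreducible (viewing $F$ as a quadratic polynomial in $x_4$ over $K[x_1,x_2,x_3]$ and excluding the few possible splittings is routine), it is normal.

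For the node property, by $\mathfrak S_4$-equivariance I only need to check one representative of each orbit. At $P_1=(0,0,0,1)$, in the chart $x_4=1$, the quadratic part of the local equation is
$$c(x_1x_2+x_1x_3+x_2x_3)=c\bigl((x_1+x_3)(x_2+x_3)+x_3^2\bigr),$$
the smooth-conic normal form $XY+Z^2$ of Section~1.1. At $P_2=(0,0,1,1)$, in the chart $x_4=1$ with local coordinate $\tilde x_3:=x_3+1$, the quadratic part comes out to $c(x_1+x_2)^2+c(x_1+x_2)\tilde x_3+x_1x_2$; setting $u:=x_1+x_2$ and $w:=c\tilde x_3+x_2$ rewrites this as $cu^2+uw+w^2+c^2\tilde x_3^2$, and the binary form $cu^2+uw+w^2$ splits over $K$ into two distinct linear factors because the Artin--Schreier polynomial $s^2+s+c$ is separable. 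So the tangent cone at $P_2$ is again of type $L_1L_2+Z^2$ and $P_2$ is a node.

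The only mildly delicate step is the reduction of the tangent cone at $P_2$ to the smooth-conic normal form in characteristic~$2$; everything else is bookkeeping from Proposition~\ref{main-symm}.
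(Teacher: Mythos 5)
Your overall strategy is the same as the paper's: exclude all but two $\mathfrak S_4$-orbits via the classification of singular points of symmetric quartics, then compute the tangent cone at one representative of each surviving orbit. Your two tangent-cone computations are correct; at $(0,0,1,1)$ the paper obtains a cleaner normal form by taking $x_1,x_2,\s_1$ as local coordinates (the quadratic part becomes $x_1x_2+c\,\s_1(x_1+x_2)$, visibly of rank $3$), but your reduction via $u=x_1+x_2$, $w=c\tilde x_3+x_2$ and the separability of $ct^2+t+1$ is also valid, and your care about the circularity of invoking Proposition~\ref{main-symm} (which presupposes normality) is a genuine improvement in rigour.

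There is, however, one coordinate pattern your orbit inspection misses. Once you know that no singular point of $X_c$ has four distinct coordinates, you must run through \emph{all} patterns with at most three distinct values, and the pattern $(1,1,b,b)$ with $b\neq 0,1$ (two pairs of equal, nonzero, distinct values) is not among the itemized types of Proposition~\ref{main-symm}; it is therefore not covered by your claim that ``every other orbit-type is excluded by the parameters.'' It is absent from that list precisely because the proof of Proposition~\ref{main-symm} shows it cannot occur on a \emph{normal} symmetric quartic --- which is what you are in the process of establishing for $X_c$, so appealing to the completeness of the list here would be circular. The gap is filled in one line, and the paper does exactly this: for such a point $\s_1=\s_3=0$ and $\s_4=b^2\neq 0$, so $F(1,1,b,b)=c\s_1\s_3+\s_4=b^2\neq 0$ and the point does not lie on $X_c$ at all. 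With that observation added, your argument is complete and agrees with the paper's.
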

\begin{proof}
By Lemma  \ref{sing} the only singular points with at least two coordinates equal to zero
are just the orbits of $(0,0,1,1)$ and $(0,0,0,1)$ if $a_3 = c\neq 0$.

Points with just one coordinate equal to zero are excluded by lemma \ref{onezero},
while singular points with nonzero coordinates taking three values are excluded by
proposition \ref{inf}. 

For points of type $(1,1,b,b)$ $\s_1=0$  hence they cannot lie in $ X_c$, since $a_4=1$.

For points of type $(1,1,1,b)$, $b\neq 1$, they lie in $ X_c$ iff $ c (1+b)^2 = b$,
but as we saw the condition that we have a singular point boils down to
$$ a_2 (1+b)^2 = a_4=1,$$ impossible since $a_2=0$.

For the last assertion, at the point $(0,0,1,1)$ we have $x_3=1$ and local coordinates $x_1, x_2, \s_1$: 
then the quadratic part of the equation is
$$ x_1 x_2 + \s_1 (x_1 +x_2)$$
 and we have a node.

Likewise, at the point $(0,0,0,1)$ we have $x_4=1$ and local coordinates $x_1, x_2, x_3$:
then the quadratic part of the equation is
$$ \s_3 = x_1 x_2 +x_1 x_3 + x_2 x_3 $$
and again we have a node.

\end{proof}

\bigskip

{\em Proof of Theorem \ref{symmetric}}

We have already seen in Proposition \ref{main-symm} that the only singular orbit with 6 elements is the
orbit of the point $(0,0,1,1)$, and this point is a singular point if and only if $\be=0$,
that is, $ X \in \sP(6)$. Similarly the only orbit with one element is the point $(1,1,1,1)$, which is singular if and only if
$a_4=0$, that is, $X \in \sP(1)$. 

We prove directly now that $ \sP(1) \cap  \sP(6)$ contains no normal surface: since $\be=a_4=0$ 
imply that $\s_1 $ divides the equation $F$ of $X$.

We pass now to consider $\sP(12)$, the closure of the locus of quartics with an orbit of singular points
having 12 elements. This locus consists of three sets, $\sP(12,0,0) , \sP(12,0,1),  \sP(12,1,1)$,
and for the second and third set there must exist $z \neq 0$ such that 
$ a_3= z a_2 , a_4 = z^2 a_2$.  In particular we must have $a_2 a_4 + a_3^2=0$.

The above equation holds in particular if $a_2= a_3=0$, and then we can find a `unique' $b \neq 0,1$
such $a_1 (1+b^4) + \be b^2=0$ provided  $a_1 \neq 0, \be \neq 0$ (in fact, the two roots $b, \frac{1}{b}$
yield the same $\mathfrak S_4$-orbit  in projective space).

If instead $a_2 a_4 + a_3^2=0$ and $a_2, a_3 \neq 0$, we find $z \neq 0 $ such 
$ a_3= z a_2 , a_4 = z^2 a_2$, and then the pair $b,c$ is determined by 
the conditions that $b+c = z$,
and that  $bc$ is the solution of the equation 
$\be (1 + bc)^2  + a_1 z^4 + a_2 z^2 (1+z)=0.$
Of course $bc=0$ if and only if we are in case $\sP(12,0,1)$.

The locus  $\sP(4)$,  defined by $a_2 ( \be + a_2 + a_3) + a_1 a_4 =0$, clearly contains the 
loci $\sP(4,0,0,0)$, $\sP(4,b)$: moreover if the above equation is satisfied, we can find a unique $b$
(equivalently, a unique $(1+b)^2$) such that 
$a_4= a_2 (1+b)^2 ,  \be= a_1(1+b)^2 +a_2 + a_3,$
if $a_1 \neq 0$ or $a_2 \neq 0$
(observe that the  equation of $\sP(4)$ is the condition for the  simultaneous solvability of both equations
for $(1+b)^2$).

We pass now to further intersection properties of these loci.

We have seen that $\sP(6) \cap \sP(1)$ contains no normal surface, while 
$\sP(6) \cap \sP(4)$ contains normal surfaces by Proposition \ref{ten}.

 $\sP(4) \cap \sP(1)$ is the union of $a_2=a_4=0$ and of $a_4=\be + a_2 + a_3 =0$.
 These are two components whose general element has 5 singular points.
 
 That $\sP(6) \cap \sP(12)$ contains no normal surface follows since if $\be=0$
 the locus $\sP(6) \cap \sP(12,0,0)$ consists of the surface $\{\s_4=0\}$, while 
  the locus $ \sP(6) \cap (\sP(12,0,1) \cup \sP(12,1,1))$ consists of surfaces with infinitely many singular points
  (for each choice of $b,c$ with $ b+c = z$ we get a singular point).
  
  That $\sP(1) \cap \sP(12)$ contains no normal surface follows by (I) of Lemma \ref{sing},
  since then $a_3=a_4=0$.
  
  We consider now $\sP(4) \cap \sP(12)$, defined by 
  $$a_2 ( \be + a_2 + a_3) + a_1 a_4 =0, \ a_2 a_4 + a_3^2=0.$$
  First of all, if $a_2=0$, then $a_3= a_1 a_4=0$, and $a_2=a_3=a_4=0$
  yields surfaces of the form $a_1 \s_1^4 + \be \s_2^2$, hence with singular curve
  $ \s_1 = \s_2=0$.
  
  Instead, the case  $a_1= a_2=a_3=0$ yields a surface of the form (if irreducible)
  $X = \{\s_4 + \be \s_2^2=0\}$.
  
  At the four coordinate points both $\s_2, \s_4$ vanish, and we see easily that these
  are uniplanar double points ( if $x_1=1, x_j =0, j \geq 2$, then the local equation is
  $$ (x_2 + x_3 + x_4)^2 + x_2 x_3 x_4 + (x_2 x_3 + x_2  x_4+  x_3 x_4)^2=0).$$
  An easy inspection of the cases of Proposition \ref{main-symm} shows that there
  are no other singular points for $\be \neq 0$.
  
  We may now assume that $a_2=1$, hence $a_3=z \neq 0$, $a_4=z^2$, 
  and
  $$ (*) \ a_1 z^2 + (1+z+ \be)=0.$$
  
  The point $(1,1,1,1+z)$ and its orbit are then  singular points of $X$. 
  We claim that these are all.
  
  In fact, there is no orbit consisting of 12 singular points by the following arguments.
  
  Case $ \sP(12,0,0)$ is excluded since we have $ a_2 \neq 0$.
  
  In case $ \sP(12,0,1)$ $(*)$ and $ \be = a_1 z^4 + z^2 (1+z)$ imply
  $ \be = \be z^2$, hence $\be =0$, since  $z \neq 1$. We are then done since we have shown 
  $\sP(6) \cap \sP(12) \cap \sF = \emptyset.$
  
  In case $ \sP(12,1,1)$ $(*)$ and $ \be (1 + bc)^2  = a_1 z^4 + z^2 (1+z)$ imply
  $ \be (1 + z^2 + (bc)^2)=0$. Since $\be =0$ leads to a contradiction as above, 
  we have
  $$ 0 = 1 + b + c + bc = (1+b)(1+c),$$
  a contradiction since $b,c \neq 0,1$.
  
  Likewise, there is no other orbit of singular points with cardinality $4,6,1$ by Proposition \ref{main-symm}. 
  
  To finish the proof that there are no more than 12 singular points, it suffices to observe that
  cases $ \sP(12,0,0)$, $ \sP(12,0,1)$, $ \sP(12,1,1)$ are mutually exclusive for $\be \neq0$,
  since $a_2=0$ in the first case, and $a_2 \neq 0$ for the other two, while
  the second and third case are exclusive because $bc \neq 0$.
  
  Finally, one sees also that the two subcases of $\sP(4)$ are mutually exclusive,
  hence the possible cardinalities of $Sing(X)$, for $X$ a normal symmetric surface,
  are only $1,4,5,6,10,12$.

\qed 

\medskip

 We summarize in the next corollary some  result obtained so far:

\begin{cor}
The maximal number of singular points that  a normal  symmetric quartic  $X$  can have
is exactly  $12$.

The  symmetric quartics of the form $X = \{\s_4 + \be \s_2^2=0\} , \be \neq 0$
have 4 uniplanar double points, which are  singularities of type $D_4$.

\end{cor}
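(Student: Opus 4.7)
The bound $|\mathrm{Sing}(X)| \leq 12$ is immediate from Theorem \ref{symmetric}, which enumerates the possible cardinalities as $\{1,4,5,6,10,12\}$; to verify that $12$ is attained I would write down a normal element of $\sP(12,1,1)$. Following Proposition \ref{main-symm}, choose $b, c \in K \setminus \{0,1\}$ with $b \neq c$ and $z := b+c \neq 0$, set $a_3 = z a_2$, $a_4 = z^2 a_2$, and pick $a_1, a_2, \beta$ generically so that $\beta(1+bc)^2 + a_1 z^4 + a_2 z^2(1+z) = 0$. This places the full $12$-element $\mathfrak S_4$-orbit of $(1,1,b,c)$ in $\mathrm{Sing}(X)$, and the incompatibilities proved inside Theorem \ref{symmetric} (such as $\sP(12) \cap \sP(6) \cap \sF = \emptyset$ and $\sP(12) \cap \sP(1) \cap \sF = \emptyset$) rule out additional singular orbits.

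For the second assertion, fix $X = \{\s_4 + \beta \s_2^2 = 0\}$, i.e.\ the case $a_1 = a_2 = a_3 = 0$, $a_4 = 1$, $\beta \neq 0$. Running through the orbit-by-orbit analysis of Proposition \ref{main-symm}, I expect the only singular orbit to be that of the coordinate points: the defining conditions for $\sP(1), \sP(6)$, and $\sP(12,\cdot,\cdot)$ all fail (e.g.\ $a_4 \neq 0$ excludes $\sP(1)$ and $a_2 a_4 + a_3^2 = 0$ combined with the extra parametric equations forces contradictions in $\sP(12)$), while within $\sP(4)$ only $\sP(4,0,0,0)$ survives, since the parametric conditions of $\sP(4,b)$ are incompatible with $a_2 = 0$. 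By $\mathfrak S_4$-symmetry the four coordinate points then have identical analytic type, so it suffices to analyze $P = (1,0,0,0)$.

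Setting $x_1 = 1$ and using the characteristic-$2$ identity $\s_2^2 = (x_2+x_3+x_4)^2 + (x_2 x_3 + x_2 x_4 + x_3 x_4)^2$, the local equation at $P$ becomes
\[
\beta(x_2+x_3+x_4)^2 + x_2 x_3 x_4 + \beta(x_2 x_3 + x_2 x_4 + x_3 x_4)^2 = 0.
\]
The quadratic part is the square of the nonzero linear form $\sqrt{\beta}\,(x_2+x_3+x_4)$, identifying $P$ as a uniplanar double point; and the cubic $x_2 x_3 x_4$ restricted to the tangent plane $\{x_2+x_3+x_4 = 0\}$ cuts out three distinct lines through the origin, which is the expected ``three extra branches'' picture of a $D_4$ singularity.

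The main obstacle will be certifying the type precisely as $D_4$, because in characteristic $2$ the $D_4$ class splits into Artin's analytic subtypes $D_4^0, D_4^1, D_4^2$ and the tangent-cone data alone do not pin down which subtype arises. To close this step I would blow up $P$ once and compute the exceptional divisor explicitly: I expect a central $(-2)$-curve coming from the double tangent plane, meeting three further $(-2)$-curves arising from the three lines of the cubic, assembling into the Dynkin configuration $\widetilde{D}_4$. After verifying negative-definiteness and the simple normal crossing structure of this configuration, Artin's classification of rational double points in characteristic $2$ then forces $P$ to be of type $D_4$ (the subtype index being recoverable, if desired, from a refinement of the same local computation).
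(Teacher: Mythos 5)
Your argument follows the paper's own route: the bound and its attainment are read off from Theorem \ref{symmetric} (with a witness taken from $\sP(12,1,1)$), and the $D_4$ claim is verified exactly as in the paper by blowing up a coordinate point of $\{\s_4+\be \s_2^2=0\}$ once and finding a central exceptional curve carrying three nodes. Two small remarks: the resulting dual graph (one central $(-2)$-curve meeting three further $(-2)$-curves) is the Dynkin diagram $D_4$, not the affine diagram $\widetilde{D}_4$; and your concern about Artin's characteristic-$2$ subtypes $D_4^0, D_4^1$ is not an obstacle, since the corollary asserts only the resolution type $D_4$, which your blow-up computation already establishes.
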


\begin{proof}
We have the local equation
$$ (x_2 + x_3 + x_4)^2 + x_2 x_3 x_4 + (x_2 x_3 + x_2  x_4+  x_3 x_4)^2=0.$$
Blowing up the singular point, we obtain a line in the exceptional $\PP^2$,
with three singular points which are nodes,
since we get the equation
$$ x^2 + u y z (x+y+z) + u^2 (\dots)=0,$$
where $u=0$ is the equation of the exceptional divisor.

\end{proof}

\bigskip

\bigskip

{\bf Acknowledgement:} I would like to thank Stephen Coughlan,  the referee of \cite{kummers},
  and especially Matthias Sch\"utt for interesting remarks, helpful comments and suggestions, and encouragement.
  
  The construction in Subsection 2.3 is due to Matthias Sch\"utt.
  
  Thanks to a referee for pointing out some inaccuracies.

\bigskip

\end{document}